\newtheorem{thm}{Theorem}
\newtheorem{proposition}[thm]{Proposition}
\newtheorem{lemma}[thm]{Lemma}
\newtheorem{experiment}[thm]{Experiment}
\newcommand{\blot}[1]{}
\numberwithin{equation}{section}
\numberwithin{figure}{section}
\numberwithin{thm}{section}
\begin{document}
\title{\textbf{Steady State Solutions of a Mass-Conserving Bistable Equation with a Saturating Flux}}

\author{M. Burns\footnotemark[2]\, and M. Grinfeld\footnotemark[2]}
\date{\today}
\maketitle


\renewcommand{\thefootnote}{\fnsymbol{footnote}}

\footnotetext[2]{Department of Mathematics and
Statistics, University of Strathclyde, 26 Richmond Street, Glasgow G1
1XH, Scotland, UK}

\renewcommand{\thefootnote}{\arabic{footnote}}

\maketitle
\begin{abstract} 
\noindent We consider a mass-conserving bistable equation with a saturating flux on an interval. This is the quasilinear analogue of the Rubinstein-Steinberg equation, suitable for description of order parameter conserving solid-solid phase transitions in the case of large spatial gradients in the order parameter. We discuss stationary solutions and investigate the change in bifurcation diagrams as the mass constraint and the length of the interval are varied.
\end{abstract}
{\bf Keywords:} Bifurcation, Liapunov-Schmidt reduction, quasilinear parabolic equation, classical and non-classical solutions.

{\bf AMS subject classifications:} 35A09, 35B32, 35J66, 35K65, 35K93.


\thispagestyle{plain}

\section{Introduction}\label{sect:intro} 

Consider a bounded interval, $\Omega=(0,L)\subset\mathbb{R}$, $L>0$. To extend the Ginzburg-Landau solid-solid phase transition theory to high gradients in the order parameter $u$, Rosenau \cite{ROSE} proposed the following free energy functional: \begin{equation}\label{eq:energy}
 E[u](t)=\int_\Omega \left[W(u)+\epsilon \Psi(u_x)\right] \, dx,
\end{equation}
where the diffusion coefficient $\epsilon \in(0,\infty)$, the interface energy $\Psi(s)$ is a convex function of its variable that grows linearly in $s$ so that, for example, 
$$
\Psi(s)=\sqrt{1+s^2}-1, 
$$
and $W(u)$ is the bulk energy which we take to be the double-well function,
$$
W(u)=\frac{u^4}{4}-\frac{u^2}{2}.
$$ 
The $L^2$-gradient flow of~(\ref{eq:energy}) gives the bistable Rosenau equation 
\begin{equation}\label{eq:Ros}
u_t=\epsilon\left(\psi(u_x)\right)_x+f(u), 
\end{equation}
where $f(u)=-W'(u)$ and
$$
\psi(s)=\Psi'(s)=\frac{s}{\sqrt{1+s^2}}
$$
is the well-known mean curvature operator and can be viewed as the flux which satisfies 
\[
\psi(s) \rightarrow \pm 1 \hbox{ as } 
s \rightarrow \pm \infty. 
\]
The bistable Rosenau equation~(\ref{eq:Ros}) with the physically relevant Neumann boundary conditions is studied in \cite{BG} where in particular it is shown that the bifurcation structure for the stationary problem associated with~(\ref{eq:Ros}) depends on the parameter $\epsilon$ as well as on the length $L$ of the interval $\Omega$. In fact, one can prove 

\begin{proposition}\cite[Proposition 3.1]{BG}
The $k$-th bifurcation from the trivial solution of the stationary problem associated with~(\ref{eq:Ros}) is a supercritical pitchfork if $L>k\pi/\sqrt{2}$ and a subcritical pitchfork if the inequality is reversed.
\end{proposition}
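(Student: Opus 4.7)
The plan is a Liapunov--Schmidt reduction at the $k$-th bifurcation point of the trivial branch in the Neumann problem for~(\ref{eq:Ros}), followed by an explicit computation of the cubic coefficient; the discriminator $L=k\pi/\sqrt{2}$ will appear as the zero of that coefficient. Because $\psi$ and $f$ are both odd, the problem enjoys the $\mathbb{Z}_2$-symmetry $u\mapsto -u$, so the reduced equation contains only odd powers of the amplitude: no quadratic correction has to be computed, and the direction of the pitchfork is determined by a single cubic coefficient.

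Linearising about $u\equiv 0$ gives, since $\psi'(0)=f'(0)=1$, the Neumann problem $\epsilon v''+v=0$, whose eigenpairs are $\lambda_k=1-\epsilon k^2\pi^2/L^2$ and $\phi_k(x)=\cos(k\pi x/L)$. Hence the $k$-th bifurcation is at $\epsilon_k=L^2/(k^2\pi^2)$, with one-dimensional kernel spanned by $\phi_k$. From $\psi(s)=s-s^3/2+O(s^5)$, the cubic terms in the expansion of $\epsilon\psi(u_x)_x+f(u)$ about $u\equiv 0$ are $-\frac{3\epsilon}{2}u_x^2u_{xx}-u^3$.

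Writing $u=\alpha\phi_k+w$ with $w\perp\phi_k$ in $L^2(\Omega)$, the $\mathbb{Z}_2$-symmetry forces $w=O(\alpha^3)$, so through cubic order the reduced bifurcation equation reads
\[
c_1(\epsilon-\epsilon_k)\alpha+c_3\alpha^3+O(\alpha^5)=0,
\]
with $c_1=\int_0^L\phi_k''\phi_k\,dx=-k^2\pi^2/(2L)<0$ and
\[
c_3=-\frac{3\epsilon_k}{2}\int_0^L(\phi_k')^2\phi_k''\phi_k\,dx-\int_0^L\phi_k^4\,dx.
\]
Using $\int_0^L\sin^2(k\pi x/L)\cos^2(k\pi x/L)\,dx=L/8$ and $\int_0^L\cos^4(k\pi x/L)\,dx=3L/8$, together with the identity $\epsilon_k k^2\pi^2=L^2$, these integrals combine to $c_3=3(k^2\pi^2-2L^2)/(16L)$.

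The nontrivial branches satisfy $\alpha^2=-c_1(\epsilon-\epsilon_k)/c_3$, so they emerge on the side of $\epsilon_k$ on which $u\equiv 0$ is linearly unstable (the supercritical case) exactly when $c_1$ and $c_3$ share a sign. Since $c_1<0$, this is equivalent to $c_3<0$, i.e.\ $L>k\pi/\sqrt{2}$; the reverse inequality puts the branch on the stable side and yields the subcritical pitchfork. The main obstacle is sign-bookkeeping in the cubic integrals and correctly identifying the "unstable" side of $\epsilon_k$: the analytic content is routine, but any single sign error would reverse the conclusion.
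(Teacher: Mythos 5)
Your computation is correct and follows essentially the same route as the paper: a Liapunov--Schmidt reduction at the $k$-th Neumann eigenvalue with an explicit cubic coefficient, whose sign (proportional to $k^2\pi^2-2L^2$) matches the paper's $h_{yyy}=\frac{3k^2\pi^2}{4L^3}(3k^2\pi^2-6L^2)$ obtained in the Appendix at $M=0$. The only cosmetic differences are that you use $\epsilon$ rather than $\lambda=1/\epsilon$ as the bifurcation parameter and invoke the oddness of $\psi$ and $f$ to dispense with the quadratic correction term $S^{-1}E[d^2G]$, which in the paper's formalism vanishes anyway since $f''(0)=0$.
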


Through an analysis of the {\em time map} (see \cite{SW}) associated with the stationary problem for~(\ref{eq:Ros}), one can also show that, for any given value of $L$, there is a value of $\epsilon=\epsilon^*(L)$ such that for $\epsilon< \epsilon^*(L)$ there are classical (i.e. $C^2((0,L))\cap C^1([0,L])$) solutions to that problem and that solutions at $\epsilon = \epsilon^*(L)$ develop infinite gradient (see \cite{BG} for details).

Since the physical arguments of Rosenau \cite{ROSE} for assuming the free energy of a system to have the form (\ref{eq:energy}) are compelling, it makes sense to study a mass-conserving version of~(\ref{eq:Ros}). There are many ways of doing that, the simplest being to consider the mass-constrained $L^2$-gradient flow of (\ref{eq:energy}) which leads to 

\begin{equation}\label{eq:massconrose}
u_t\!=\!\epsilon\left(\psi(u_x)\right)_x+f(u)-\frac{1}{|\Omega|}\int_\Omega f(u)\,dx,\,\,\,x\in\Omega,\,\,t>0
\end{equation}
where $|\Omega|$ is the length of $\Omega$ and $\epsilon$, $\psi(s)$ and $f(u)$ are as above, with the Neumann boundary conditions $u_x=0$ on $\partial\Omega$ and a suitable initial condition $u(x,0)=u_0(x)$. Clearly, 
\[
\int_\Omega u(x,t)\,dx=\int_\Omega u_0(x)\,dx\hspace{0.2in}
\]
Thus (\ref{eq:massconrose}) is the quasilinear analogue of the Rubinstein-Sternberg equation from \cite{RS}. Local mass-conserving versions of (\ref{eq:Ros}) can also be introduced leading to a Cahn-Hilliard type equation, 
\begin{eqnarray}\label{eq:CHRose}
 u_t\!&=&\!-(\epsilon(\psi(u_x))_x+f(u))_{xx},\,\,\,x\in\Omega,\,\,t>0,\\
u_x\!&=&\!(\epsilon(\psi(u_x))_x+f(u))_x=0,\,\,\,x\in\partial\Omega,\nonumber
\end{eqnarray}
which can be obtained as the $H^{-1}$-gradient flow of the free energy functional in~(\ref{eq:energy}). Note that one can integrate the stationary problem associated with~(\ref{eq:CHRose}) twice with respect to $x$ using the Neumann boundary conditions to obtain the stationary problem associated with~(\ref{eq:massconrose}). Hence stationary solutions to~(\ref{eq:massconrose}) are also stationary solutions to~(\ref{eq:CHRose}).

\section{The stationary problem}\label{sect:statprob}

We are interested in this section in characterising the multiplicity of solutions to the stationary problem associated with~(\ref{eq:massconrose}) and hence also with (\ref{eq:CHRose}),
\begin{eqnarray}\label{eq:masscon}
\left(\psi(u_x)\right)_x &+&\lambda f(u)-\frac{\lambda}{L}\int_0^Lf(u(x))\,dx=0,\,\,\,\,\,\,\,x\in(0,L), \\
 u_x\!&=&\!0,\hbox{ at } x=0,\,,L, \; \;  \frac{1}{L}\int_0^Lu(x)\,dx\!=\!M,\nonumber
\end{eqnarray}
as the bifurcation parameter $\lambda=\frac{1}{\epsilon}$ varies in $(0,\infty)$. 

\subsection*{Liapunov-Schmidt reduction}

We use the Liapunov-Schmidt reduction \cite{GS} to obtain local bifurcation results for (classical) solutions to the non-local equation in~(\ref{eq:masscon}). This work is a quasilinear analogue to work done in \cite{EFG} which considers the case where $\psi(s)=s$ in~(\ref{eq:masscon}) and uses local bifurcation and path-following methods to examine the changes in bifurcation diagrams of stationary solutions to the Cahn-Hilliard model of phase separation as the mass constraint is varied.

Note that if $u(x)$ is a solution to~(\ref{eq:masscon}) then $u(L-x)$ is also a solution to~(\ref{eq:masscon}) and so bifurcations from the trivial solution $u(x)=M$ of~(\ref{eq:masscon}) arise as pitchforks. \\

We set $v=u-M$ and recast problem~(\ref{eq:masscon}) as $G(v,\lambda,M)=0$, where
\begin{eqnarray}\label{eq:G}
 G(v,\lambda,M)\!=\!\left(\frac{v_x}{\sqrt{1+(v_x)^2}}\right)_x\!+\!\lambda f(v+M)\!-\!\frac{\lambda}{L}\int_0^Lf(v(x)+M)\,dx.
\end{eqnarray}
Hence we regard $G$ as an operator $G:D(G)\subset H\rightarrow H$ where $D(G)$ is given by
\begin{eqnarray}
 D(G)=\left\{v\in C^2((0,L))\,\,:\,\,v'(0)=v'(L)=0,\,\,\frac{1}{L}\int_\Omega v(x)\,dx=0\right\},\nonumber
\end{eqnarray}
and $H$ is the space
\begin{eqnarray}
H=\left\{w\in C((0,L))\,:\,\frac{1}{L}\int_0^Lw(x)\,dx=0\right\}.\nonumber
\end{eqnarray}
The linearisation about the trivial solution $v=0$ is given by
\begin{eqnarray}
 (dG)_{0,\lambda,M}\cdot w\!&=&\!\frac{d}{dh}G(0+hw,\lambda,M)|_{h=0}\nonumber \\
\!&=&\! w_{xx}+\lambda f'(M)w-\frac{\lambda}{L}\int_0^Lf'(M)w(x)\,dx\nonumber \\
 \!&=&\!w_{xx}+\lambda f'(M)w,\nonumber 
\end{eqnarray}
since $w\in D(G)$. Hence $\textup{ker}(dG)_{0,\lambda,M}$ is one-dimensional when $\lambda=\lambda_k=\frac{k^2\pi^2}{L^2f'(M)}$ and is spanned by $v_k=\cos\left(\frac{k\pi x}{L}\right)$. Thus in a neighbourhood of a bifurcation point $(\lambda_k,0)$ of~(\ref{eq:masscon}), we show that solutions of $G(v,\lambda,M)=0$ on $H$ are in one-to-one correspondence with solutions of the reduced equation $h(\lambda,y)=0$, $y \in \mathbb{R}$, obtained through a Liapunov-Schmidt reduction. 

For the details of the computations, the reader is referred to the Appendix. For the derivatives of the bifurcation function $h(\lambda,y)$ evaluated at $\lambda=\lambda_k$, $y=0$ we obtain from (\ref{eq:derivatives of h})
\begin{eqnarray}
h=h_y=h_{yy}=h_\lambda=0,\nonumber 
\end{eqnarray}
Also, from (\ref{eq:hlambday}), $h_{\lambda y}= Lf'(M)$ and $h_{yyy}$ is given by (\ref{eq:hyyy}). 

Let us consider first the case $M=0$. Then $h_{\lambda y}=L>0$ by~(\ref{eq:hlambday}) and from~(\ref{eq:hyyy}) 
\[
h_{yyy}=\frac{3k^2\pi^2}{4L^3}(3k^2\pi^2-6L^2),
\]
so that the bifurcation from the trivial solution will be subcritical if $L<\frac{k\pi}{\sqrt{2}}$ and supercritical if $L>\frac{k\pi^2}{\sqrt{2}}$. This corresponds with the result mentioned in Section~\ref{sect:intro} and obtained in \cite[Proposition 3.1]{BG} for the non-conserving Neumann stationary problem for~(\ref{eq:Ros}).

Now suppose $0<|M|<\frac{1}{\sqrt{5}}$. Using~(\ref{eq:hyyy}), we see that $h_{yyy}>0$ as long as
\begin{equation}\label{Lstar}
 L< L^*:=\frac{k\pi}{\sqrt{2}}\frac{[1-3M^2]}{\sqrt{1-5M^2}},
\end{equation}
which has a vertical asymptote when $|M|=\frac{1}{\sqrt{5}}$ and a turning point when $|M|=\frac{1}{\sqrt{15}}$. We plot the relationship between $L^*$ in~(\ref{Lstar}) and $M$ for some $k\in\mathbb{N}$ in Figure~\ref{fig:lvsm}.\\ 
\begin{figure}[H]
\begin{center}
\scalebox{0.44}{\psfrag{L}{\Huge $L^*$}
\psfrag{0}{\Huge $0$}
\psfrag{1}{\Huge $\frac{k\pi}{\sqrt{2}}$}
\psfrag{2}{\Huge $\frac{2\sqrt{3}k\pi}{5}$}
\psfrag{3}{\Huge $\frac{1}{\sqrt{15}}$}
\psfrag{4}{\Huge $\frac{1}{\sqrt{5}}$}
\psfrag{5}{\Huge $\frac{1}{\sqrt{3}}$}
\psfrag{6}{\Huge $-\frac{1}{\sqrt{15}}$}
\psfrag{7}{\Huge $-\frac{1}{\sqrt{5}}$}
\psfrag{8}{\Huge $-\frac{1}{\sqrt{3}}$}
\psfrag{g1}{\Huge Super}
\psfrag{g2}{\Huge Sub}
\psfrag{M}{\Huge $M$}
\includegraphics{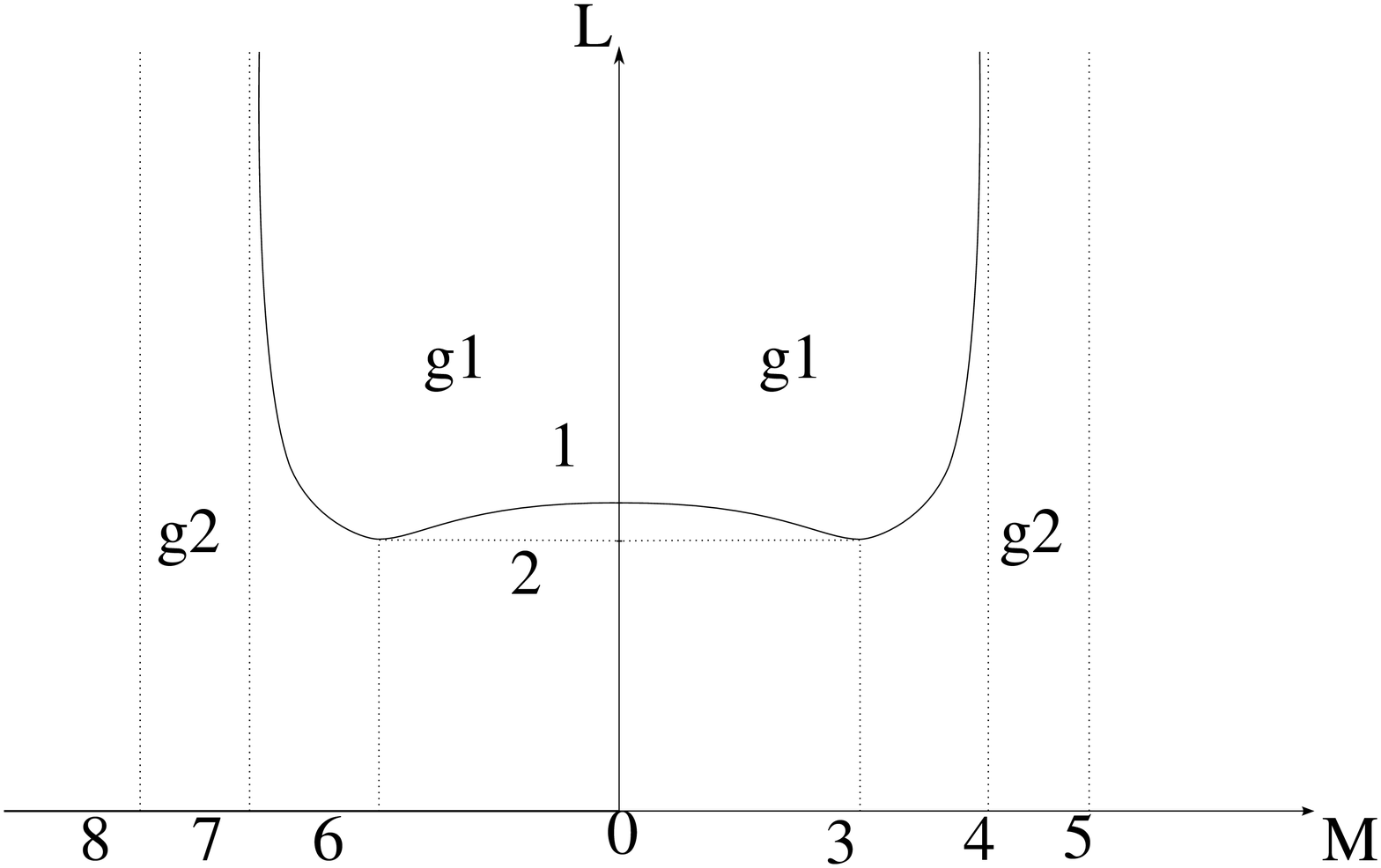}}
\end{center}
\caption{Plot of length $L^*$ against mass $M$ for some $k\in\mathbb{N}$, ($k=1$).} 
\label{fig:lvsm}
\end{figure}

Also, we note that for all $L>0$ with $\frac{1}{\sqrt{5}}<|M|<\frac{1}{\sqrt{3}}$,
\begin{eqnarray}
&&3k^2\pi^2[f'(M)]^2+L^2f'''(M)f'(M)+\frac{L^2}{3}[f''(M)]^2\nonumber \\
 \!&=&\! 3k^2\pi^2(1-3M^2)^2+6L^2(5M^2-1)>0,\nonumber 
\end{eqnarray}
so that from~(\ref{eq:hyyy}), $h_{yyy}>0$ for all for all such $L$ and $M$. Therefore, since $h_{\lambda y}=f'(M)L>0$ for all $M$ such that $|M|<\frac{1}{\sqrt{3}}$, we have established:
\begin{proposition}\label{prop:mc direction of pitchfork}
For $0\le |M|<\frac{1}{\sqrt{5}}$, the $k$-th bifurcation from the trivial solution of~(\ref{eq:masscon}) is a supercritical pitchfork if $L>L^*$ and a subcritical pitchfork if $L<L^*$ (with $L^*$ given in~(\ref{Lstar})). For $\frac{1}{\sqrt{5}}<|M|<\frac{1}{\sqrt{3}}$, bifurcation from $u(x)=M$ is a subcritical pitchfork for all $L$. In the parameter regime $|M|\ge\frac{1}{\sqrt{3}}$, there are no bifurcations from the constant solution $u(x)=M$ for any $L$.
\end{proposition}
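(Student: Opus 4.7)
The plan is to apply the standard Golubitsky--Schaeffer pitchfork recognition criteria to the reduced bifurcation function $h(\lambda,y)$ produced by the Liapunov--Schmidt reduction. Since $h=h_y=h_{yy}=h_\lambda=0$ at $(\lambda_k,0)$, the local branches satisfy, to leading order, $\tfrac{1}{6}h_{yyy}\,y^3+h_{\lambda y}(\lambda-\lambda_k)\,y=0$, so that a nontrivial branch obeys $y^2=-6\,h_{\lambda y}(\lambda-\lambda_k)/h_{yyy}$. Consequently the pitchfork is supercritical precisely when $h_{\lambda y}$ and $h_{yyy}$ carry opposite signs and subcritical when they share sign. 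With this criterion in hand the proposition reduces to determining the signs of these two quantities as functions of $M$ and $L$.

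First I would identify the admissible parameter range. Since $\lambda_k=k^2\pi^2/(L^2 f'(M))$ with $f'(M)=1-3M^2$, the eigenvalue $\lambda_k$ lies in $(0,\infty)$ only when $|M|<1/\sqrt{3}$; when $|M|\ge 1/\sqrt{3}$ no $\lambda>0$ places a cosine in the kernel of the linearisation, which immediately gives the last clause. Throughout the remaining range one has $h_{\lambda y}=L f'(M)>0$, so only the sign of $h_{yyy}$ decides criticality.

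Next I would substitute $f'(M)=1-3M^2$, $f''(M)=-6M$, and $f'''(M)=-6$ into the expression for $h_{yyy}$ supplied by (\ref{eq:hyyy}), collapsing the relevant scalar down to $3k^2\pi^2(1-3M^2)^2-6L^2(1-5M^2)$ up to a strictly positive prefactor. Equating this to zero yields exactly (\ref{Lstar}). For $0\le|M|<1/\sqrt{5}$ the factor $1-5M^2$ is positive, so $h_{yyy}>0$ iff $L<L^*$; paired with $h_{\lambda y}>0$ this delivers the subcritical/supercritical dichotomy. For $1/\sqrt{5}<|M|<1/\sqrt{3}$ the factor $1-5M^2$ is negative and the whole expression is strictly positive for every $L>0$, so $h_{yyy}>0$ unconditionally and the bifurcation is subcritical for all $L$.

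The main obstacle is the bookkeeping step in the last paragraph: verifying that the full formula referenced by (\ref{eq:hyyy}) really does simplify to $3k^2\pi^2(1-3M^2)^2-6L^2(1-5M^2)$ modulo a positive multiplier, and pinning down the sign convention so that ``opposite signs'' of $h_{yyy}$ and $h_{\lambda y}$ is genuinely the supercritical case of the normal form $\pm(y^3\pm(\lambda-\lambda_k)y)$. Everything else is a clean case-split on $|M|$ against the thresholds $1/\sqrt{5}$ and $1/\sqrt{3}$.
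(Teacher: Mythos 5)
Your proposal is correct and follows essentially the same route as the paper: it takes the Liapunov--Schmidt derivatives $h_{\lambda y}=Lf'(M)$ and the formula (\ref{eq:hyyy}) for $h_{yyy}$, reduces the sign question to $3k^2\pi^2(1-3M^2)^2-6L^2(1-5M^2)$ (your algebra and the positive prefactor check out), and performs the same case split at $|M|=1/\sqrt{5}$ and $|M|=1/\sqrt{3}$, with the last clause following as in the paper from $f'(M)\le 0$ killing the kernel for all $\lambda>0$. The only difference is that you make the pitchfork sign convention (same sign of $h_{yyy}$ and $h_{\lambda y}$ gives subcritical, opposite gives supercritical) explicit, which the paper leaves implicit.
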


\textbf{Remark}: Note that, unlike in the semilinear case of \cite{EFG} wherein $\psi(s)=s$, one can have both supercritical and subcritical pitchforks for different values of $k$.

\section{Nonexistence of classical solutions for $\lambda$ large enough}

In this section we show that as $\lambda \rightarrow \infty$, branches of {\em classical} stationary solutions do \textbf{not} persist indefinitely. To do that, we introduce an auxiliary problem. For fixed $L$ and $a\in\mathbb{R}$, consider
 \begin{eqnarray}\label{eq:ancprob}
  \left(\frac{u'}{\sqrt{1+(u')^2}}\right)'+\lambda f(u)\!&=&\!\lambda a,\\
    u'(0)=u'(L)\!&=&\!0,\nonumber
   \end{eqnarray}
which can be written as the first order system 
\begin{equation}\label{system}
\left\{ 
\begin{array}{ll}
u'&=v \\
v'&=\lambda(a-f(u))(1+v^2)^{\frac{3}{2}}
\end{array} \right.,
\end{equation}
with the first integral given by
\begin{eqnarray}\label{eq:firstintegral}
 H(u,v)=\frac{1}{\sqrt{1+v^2}}+\lambda(au-F(u)).
\end{eqnarray}
The connection between (\ref{eq:masscon}) and (\ref{eq:ancprob}) is seen by integrating (\ref{eq:ancprob}) over $\Omega=(0,L)$ to obtain
\[
\frac{1}{L}\int_0^L f(u(x))\,dx=a.
\]

\begin{lemma}\label{lem:nomoreclassical}
For a fixed $L$ and any $a\in\mathbb{R}$ such that $|a|<\frac{2}{3\sqrt{3}}$, there is a number $\lambda^*(a,L)$ such that $\forall \lambda>\lambda^*(a,L)$, there are no non-constant classical solutions to~(\ref{eq:ancprob}). 
\end{lemma}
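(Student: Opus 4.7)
The plan is to combine the phase-plane picture from (\ref{system}) and the first integral (\ref{eq:firstintegral}) with a time-map estimate. Because $|a|<2/(3\sqrt 3)$, the equation $f(u)=a$ has three real roots $u_1<u_2<u_3$; the middle root $u_2\in(-1/\sqrt3,1/\sqrt3)$ is the unique centre of (\ref{system}) since $f'(u_2)>0$, while $u_1,u_3$ are saddles. These three constants are exactly the constant solutions of~(\ref{eq:ancprob}).

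Any non-constant classical solution of~(\ref{eq:ancprob}) corresponds, via $v=u'$, to a closed periodic orbit $\Gamma$ of (\ref{system}) encircling $(u_2,0)$: classicality forces $v$ to stay bounded, so $\Gamma$ lies on a level set $H\equiv c$ with $c-\lambda G(u)>0$ throughout, and the Neumann conditions place the trajectory on the $u$-axis at $x=0,L$; the only such bounded orbits are closed ones around the centre. The half-period $T/2$ must therefore satisfy $T/2=L/k$ for some $k\in\mathbb{N}$.

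Evaluating $H\equiv c$ at a turning point $u_\pm$ gives $c=1+\lambda G(u_\pm)$, and combining with the inequality $c>\lambda G(u_2)$ (needed for the orbit to cross $u=u_2$) yields the amplitude bound
\[
G(u_2)-G(u_\pm)<\frac1\lambda .
\]
Since $G$ has a strict local maximum at $u_2$ with $G''(u_2)=-f'(u_2)<0$, this forces $|u_\pm-u_2|=O((\lambda f'(u_2))^{-1/2})$, so $\Gamma$ shrinks onto the centre as $\lambda\to\infty$.

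The crucial step is the time-map representation
\[
\frac T2=\int_{u_-}^{u_+}\frac{c-\lambda G(u)}{\sqrt{1-(c-\lambda G(u))^2}}\,du .
\]
Rescaling $u=u_2+\xi/\sqrt{\lambda f'(u_2)}$ reduces this to an integral over an $O(1)$ interval in $\xi$ with bounded integrand, producing an estimate of the form $T/2=\mathcal T(\mu)/\sqrt{\lambda f'(u_2)}+O(\lambda^{-1})$, where $\mu:=c-\lambda G(u_2)\in(0,1)$ parametrises the orbit and $\mathcal T$ is explicit and bounded. The main obstacle is translating this into the advertised bound $\lambda^*(a,L)$: the equality $T/2=L/k$ only naively bounds $\lambda$ in terms of $k$, so one also needs a matching lower bound on $T/2$ and an analysis showing that, for $\lambda$ sufficiently large, the resulting narrow $\lambda^{-1/2}$-window of admissible half-periods cannot contain any $L/k$. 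This quantitative step, carried out via the explicit form of the integrand and of the cubic $f(u)=u-u^3$, is where the main technical work of the proof lies and is what ultimately produces the threshold $\lambda^*(a,L)$.
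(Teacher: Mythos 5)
Your phase-plane setup is correct and is essentially a quantified version of the paper's argument: three roots of $f(u)=a$ with the middle one a centre, non-constant Neumann classical solutions as closed orbits with half-period $L/k$, the turning-point identity $c=1+\lambda G(u_\pm)$ (with $G(u)=au-F(u)$), the resulting amplitude bound $G(u_2)-G(u_\pm)<1/\lambda$, and the time-map formula with the $\lambda^{-1/2}$ rescaling. The paper does the same thing more informally: it confines orbits near the centre via $u^*(\lambda),u^{**}(\lambda)\rightarrow c$ and compares the period with the linearised period $2\pi/\sqrt{\lambda f'(c)}$. The genuine gap is that you stop exactly where the contradiction must be produced: you state that one still needs ``a matching lower bound on $T/2$'' and an argument that the $\lambda^{-1/2}$-window of admissible half-periods contains no $L/k$, and you defer this as ``the main technical work''. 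As written the proposal therefore does not prove the statement.

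Moreover, the closing strategy you sketch cannot succeed, because the window is not narrow in the relevant sense. Your own rescaling gives admissible half-periods of the form $\mathcal T(\mu)/\sqrt{\lambda f'(u_2)}+O(\lambda^{-1})$ with $\mathcal T(\mu)\rightarrow\pi$ as the orbit shrinks to the centre ($\mu\rightarrow1$) but tending to a strictly smaller positive constant on the orbits approaching gradient blow-up ($\mu\rightarrow0$); so the set of attainable half-periods is an interval of length of order $\lambda^{-1/2}$, whereas consecutive values $L/k$ at that scale are only $O(1/(\lambda L))$ apart. Hence for large $\lambda$ the window does capture values $L/k$ with $k\sim\sqrt{\lambda}$, and no spacing argument can exclude them: the conclusion can only hold with the number of interior inflection points (equivalently $k$) fixed. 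That is exactly how the paper argues --- it fixes $n$, so the required period $2L/n$ is a fixed constant while the linearised period $2\pi/\sqrt{\lambda f'(c)}\rightarrow0$ --- and how the lemma is subsequently used (monotone solutions, $k=1$). The repair costs you nothing: your uniform bounds $|u_\pm-u_2|\le C\lambda^{-1/2}$ together with the time-map formula give $T/2\le C(a)\,\lambda^{-1/2}$ uniformly over all closed orbits, which is incompatible with $T/2=L/k$ for any fixed $k$ once $\lambda$ exceeds a threshold $\lambda^*(a,L)$; prove and use the lemma in that fixed-$k$ form instead of aiming at a statement uniform in $k$.
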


\medskip

\begin{proof}
Suppose $0<a<\frac{2}{3\sqrt{3}}$ so that the equation $f(u)=a$ has three solutions $u_l$, $c$, $u_r$ with $u_l<0<c<u_r$ where $(u_l,0)$ and $(u_r,0)$ are saddle points for~(\ref{system}) and $(c,0)$ is a centre. From considerations on the first integral~(\ref{eq:firstintegral}) associated with the ancillary problem~(\ref{eq:ancprob}) one can see that for $\lambda<\lambda_h(a)$, there is a homoclinic loop connecting $(u_r,0)$ to itself which, in the case of a particular $\lambda<\lambda_h(a)$, we have denoted by $\gamma_\lambda$ in Figure~\ref{fig:homoclinicloop}. Note that for each $a$, $\lambda_h(a)$ is obtained through solving 
\begin{eqnarray}
 H(u_r,0)=H(c,-\infty),\nonumber
\end{eqnarray}
for $\lambda=\lambda_h(a)$. Non-constant classical solutions to~(\ref{eq:ancprob}) for $\lambda<\lambda_h(a)$ are represented in the phase plane by trajectories which encircle the centre $(c,0)$, start and end on the $u$-axis and which are {\em contained within} the homoclinic loop $\gamma_\lambda$.
\begin{figure}[H]
\begin{center}
\scalebox{0.43}{\psfrag{ul}{\Huge $u_l$}
\psfrag{ur}{\Huge $u_r$}
\psfrag{c}{\Huge $c$}
\psfrag{u1}{\Huge $u_{\lambda*}$}
\psfrag{g}{\Huge $\gamma_\lambda$}
\includegraphics{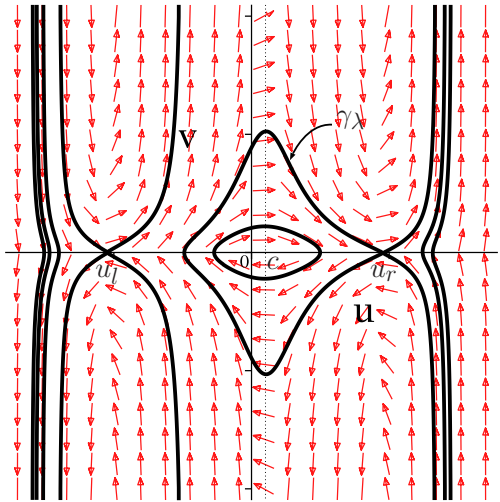}}
\end{center}
\caption{Phase portrait for~(\ref{system}) with $\lambda<\lambda_h(a)$ ($a=0.1$, $\lambda=3<\lambda_h(0.1)=6.3426$).}
\label{fig:homoclinicloop}
\end{figure}
For $\lambda$ large enough, i.e. for $\lambda=\lambda_h(a)$, the homoclinic loop will break. Through further considerations on $H(u,v)$, one sees that if $\lambda>\lambda_h(a)$, as in Figure~\ref{fig:brokehomoclinicloop}, there exist values $u^*(\lambda)$ and $u^{**}(\lambda)$ such that $u^*(\lambda)<c<u^{**}(\lambda)$, 
\begin{eqnarray}\label{eq:ustars}
 H(u^*(\lambda),0)=H(u^{**}(\lambda),0),
\end{eqnarray}
and
\begin{eqnarray}\label{eq:ustarstoc}
 u^*(\lambda)\rightarrow c^-,\,\,u^{**}(\lambda)\rightarrow c^+\hspace{0.1cm}\textup{as}\,\,\lambda\rightarrow\infty.
\end{eqnarray}
Note that for each $\lambda>\lambda_h(a)$, we can find $u^*(\lambda)$ by solving
\begin{eqnarray}
 H(u^*(\lambda),0)=H(c,\infty),\nonumber
\end{eqnarray}
for $u_l<u^*(\lambda)<c$ and then obtain $u^{**}(\lambda)$ via~(\ref{eq:ustars}). Hence non-constant classical solutions to~(\ref{eq:ancprob}) are now confined to $\gamma^*_\lambda$ as in Figure~\ref{fig:brokehomoclinicloop} where $\gamma^*_\lambda$ is the region in the phase plane enclosed by the trajectories through $(u^*(\lambda),0)$ and through $(u^{**}(\lambda),0)$ for $\lambda>\lambda_h(a)$.
\begin{figure}[H]
\begin{center}
\scalebox{0.45}{\psfrag{ul}{\Huge $u_l$}
\psfrag{ur}{\Huge $u_r$}
\psfrag{c}{\Huge $c$}
\psfrag{ul}{\Huge $u_l$}
\psfrag{u1}{\Large $u^*(\lambda)$}
\psfrag{u2}{\Large $u^{**}(\lambda)$}
\psfrag{g}{\Huge $\gamma^*_\lambda$}
\includegraphics{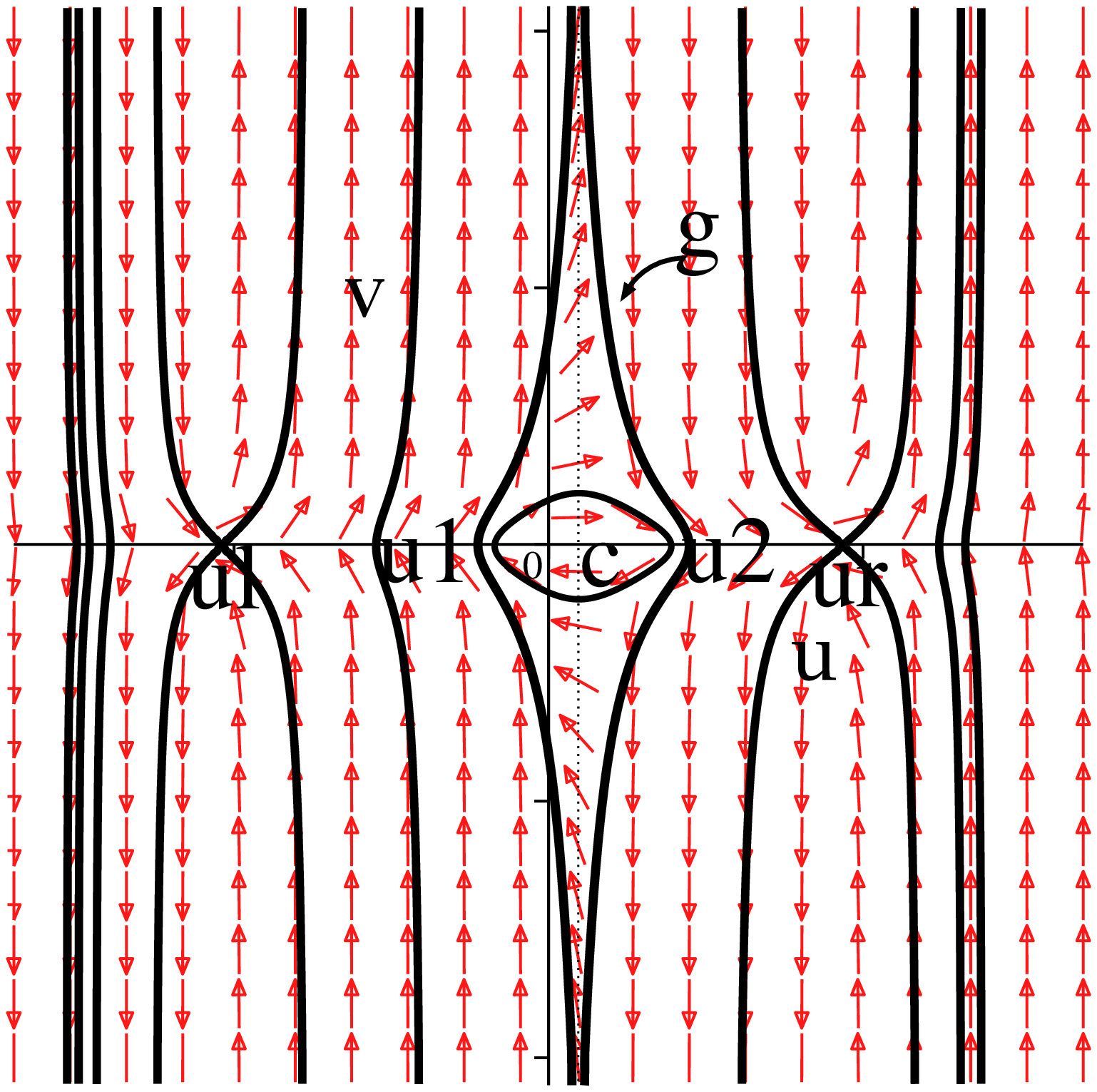}}
\end{center}
\caption{Phase portrait for~(\ref{system}) with $\lambda>\lambda_h(a)$, ($a=0.1$, $\lambda=15>\lambda_h(0.1)\simeq6.3426$).} 
\label{fig:brokehomoclinicloop}
\end{figure}
Now assume that for a given $\lambda$ and a given $n\in\mathbb{N}$, there exists a non-constant classical solution $u(x)$ to~(\ref{eq:ancprob}) which has $n$ points of inflection in $(0,L)$. Such a solution will have period equal to $\frac{2L}{n}$. For $\lambda$ large enough, because of~(\ref{eq:ustarstoc}), the only possible non-constant Neumann classical solutions to~(\ref{eq:ancprob}) will be in a small neighbourhood of the centre, hence we linearise~(\ref{eq:ancprob}) around $(c,0)$ to obtain
\begin{eqnarray}
 u''+\lambda f'(c)u=0.\nonumber
\end{eqnarray}
Thus the period of such a solution is equal to 
\begin{eqnarray}\label{eq:period}
 \frac{2\pi}{\sqrt{\lambda f'(c)}},
\end{eqnarray}
which is well-defined since $0<c<\frac{1}{\sqrt{3}}$ so that $f'(c)>0$ but~(\ref{eq:period}) will not be equal to $\frac{2L}{n}$ if $\lambda$ is large enough. This contradiction proves Lemma~\ref{lem:nomoreclassical} in the case that $ 0<a<\frac{2}{3\sqrt{3}}$. The case $-\frac{2}{3\sqrt{3}}<a<0$ can be treated similarly. Note that in the case that $a=0$ (for which non-constant classical solutions to~(\ref{eq:ancprob}) are also stationary solutions of~(\ref{eq:Ros})), there are heteroclinic loops connecting saddle points which break for $\lambda$ large enough rather than homoclinic loops but the arguments carry through just as easily.
\end{proof}

\textbf{Remark}: We note that for each $0<a<\frac{2}{3\sqrt{3}}$, if $\lambda$ is small enough then the phase portrait for~(\ref{system}) contains non-trivial classical solutions to the ancillary problem~(\ref{eq:ancprob}) which are also non-trivial classical solutions to~(\ref{eq:masscon}) for some $0<M<1$.  Similarly, for each $-\frac{2}{3\sqrt{3}}<a<0$, if $\lambda$ is small enough then the phase portrait for~(\ref{system}) contains non-trivial classical solutions to~(\ref{eq:ancprob}) which are also non-trivial classical solutions to~(\ref{eq:masscon}) for some $-1<M<0$. If $a=0$ then for $\lambda$ small enough, the phase portrait for~(\ref{system}) contains non-trivial classical solutions to~(\ref{eq:ancprob}) which are also non-trivial classical solutions to~(\ref{eq:masscon}) with $M=0$. \\ 

 We now concentrate on the multiplicity of {\em monotone} classical solutions to~(\ref{eq:masscon}) where without loss of generality, $0\le M<\frac{1}{\sqrt{3}}$ so that from now on, for $a\in\left[0,\frac{2}{3\sqrt{3}}\right)$, $\lambda^*(a,L)$ represents the value of $\lambda$ for which there are no monotone classical solutions to~(\ref{eq:ancprob}) $\forall \lambda>\lambda^*(a,L)$. Note that $\lambda^*(a,L)\rightarrow\lambda^*(0,L)$ as $a\rightarrow0$ and since the value $\lambda^*(a,L)$ must occur after the homoclinic orbit has broken for a fixed $a$, we also have that $\lambda^*(a,L)\rightarrow\infty$ as $a\rightarrow\frac{2}{3\sqrt{3}}$.\\ 

 Let us drop the dependence of $\lambda^*(a,L)$ from Lemma~\ref{lem:nomoreclassical} on $L$ and denote the value simply by $\lambda^*(a)$ since we will regard $L$ as being fixed. Hence Lemma~\ref{lem:nomoreclassical} implies that for every $a\in\left[0,\frac{2}{3\sqrt{3}}\right)$, there is a value $\lambda^*(a)$ such that monotone solutions to the ancillary problem~(\ref{eq:ancprob}) develop infinite gradient at some $x_0\in(0,L)$. Therefore for every $\lambda\ge\lambda^*(0)$, there is a value of $a$, corresponding to the inverse of $\lambda^*(a)$, which will be denoted by $a^*(\lambda)$ and be such that $a^*(\lambda)\rightarrow\frac{2}{3\sqrt{3}}$ as $\lambda\rightarrow\infty$ and $a^*(\lambda)\rightarrow 0$ as $\lambda\rightarrow\lambda^*(0)^+$. We give a sketch of the function $a^*(\lambda)$ in Figure~\ref{fig:lambdastara} but note that we do not prove that the curve $a^*(\lambda)$ is continuous (see Theorem~\ref{thm:nomoreclassical}).
\begin{figure}[H]
 \begin{center}
  \scalebox{0.55}
{\psfrag{l}{\LARGE $\lambda$}
\psfrag{0}{\Huge $0$}
\psfrag{l2}{\LARGE $\lambda^*(0)$}
\psfrag{a}{\LARGE $a$}
\psfrag{a2}{\LARGE $\frac{2}{3\sqrt{3}}$}
\psfrag{a1}{\LARGE $a^*(\lambda)$}
\includegraphics{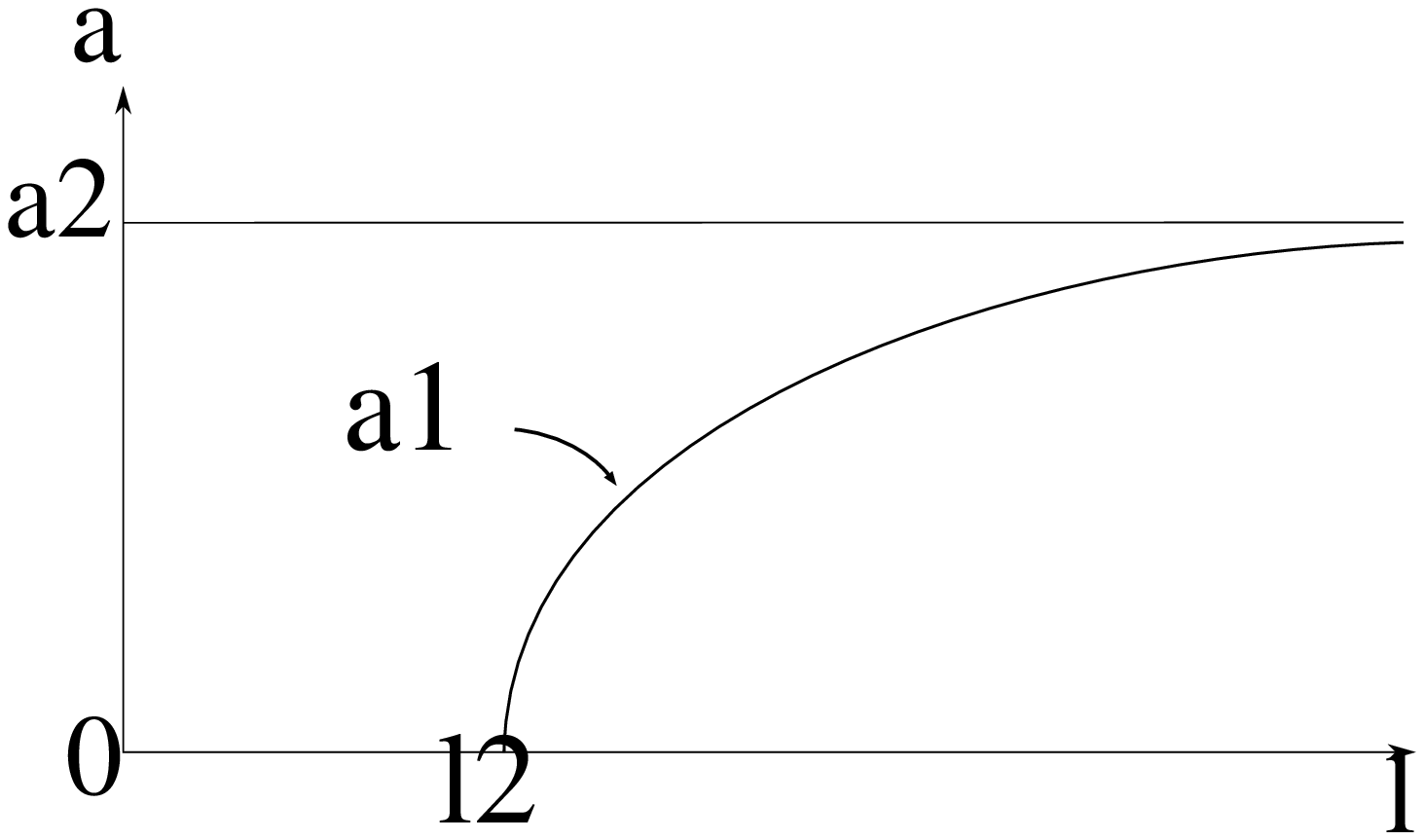}}
 \end{center}
\caption{Sketch of the curve $a^*(\lambda)$ in the $(\lambda,a)$-plane.}
\label{fig:lambdastara}
\end{figure}
\begin{lemma}\label{lem:a bar M}
For a fixed $L$ and each $0\le M<\frac{1}{\sqrt{3}}$, the bifurcation curve of monotone classical solutions to~(\ref{eq:masscon}) does not exist in the region in the $(\lambda,a)$-plane defined by $a^*(\lambda)<a<\frac{2}{3\sqrt{3}}$ for $\lambda$ large, where $a^*(\lambda)$ is the curve obtained from Lemma~\ref{lem:nomoreclassical}.
\end{lemma}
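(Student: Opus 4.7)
My plan is to argue by contradiction. Suppose there exists a sequence $\lambda_n \to \infty$ and monotone classical solutions $u_n$ of~(\ref{eq:masscon}) with mass $M \in [0, 1/\sqrt{3})$, whose associated ancillary parameter $a_n := \frac{1}{L}\int_0^L f(u_n)\,dx$ satisfies $a^*(\lambda_n) < a_n < \frac{2}{3\sqrt{3}}$. The discussion preceding Figure~\ref{fig:lambdastara} gives $a^*(\lambda) \to \frac{2}{3\sqrt{3}}$ as $\lambda \to \infty$, so $a_n \to \frac{2}{3\sqrt{3}}$; consequently the centre $c(a_n)$ and the saddle $u_r(a_n)$ of~(\ref{system}) both tend to $1/\sqrt{3}$, the double root of $f(u)=\frac{2}{3\sqrt{3}}$ in $(0,\infty)$.

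The key geometric input I would extract from the first integral~(\ref{eq:firstintegral}) is that the $u$-extent of the homoclinic loop $\gamma_\lambda$ (whenever $\lambda < \lambda_h(a)$) is precisely $[u_\star(a), u_r(a)]$, where $u_\star(a) \in (u_l, c)$ is the unique root of $au - F(u) = au_r - F(u_r)$ in that interval (the $\{v=0\}$ tangency of the loop opposite the saddle). A direct computation at $a = \frac{2}{3\sqrt{3}}$ yields the factorisation $au - F(u) - \tfrac{1}{12} = \tfrac{1}{4}(u - 1/\sqrt{3})^3(u + \sqrt{3})$, so $1/\sqrt{3}$ is a triple root, and hence for $a$ slightly smaller $u_\star(a)$, $c(a)$ and $u_r(a)$ all converge to $1/\sqrt{3}$ and the loop collapses. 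In the broken-loop regime $\lambda_n \ge \lambda_h(a_n)$, the confinement interval $[u^*(\lambda_n), u^{**}(\lambda_n)]$ is contained in $[u_\star(a_n), u_r(a_n)]$ by a direct comparison of the level-set equations $au - F(u) = (ac - F(c)) - 1/\lambda$ and $au - F(u) = au_r - F(u_r)$, so the same $u$-bound applies uniformly.

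Since any monotone classical solution of~(\ref{eq:ancprob}) at parameters $(\lambda_n, a_n)$ is a half-orbit either inside $\gamma_{\lambda_n}$ or inside $\gamma^*_{\lambda_n}$, its pointwise range lies in $[u_\star(a_n), u_r(a_n)]$, an interval shrinking to $\{1/\sqrt{3}\}$. Averaging over $x$ then yields $M = \bar u_n \to 1/\sqrt{3}$, contradicting $M < 1/\sqrt{3}$.

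The main obstacle I expect is the sub-case $\lambda_n < \lambda_h(a_n)$: a short calculation with~(\ref{eq:firstintegral}) gives $\lambda_h(a) = \bigl[(ac - F(c)) - (au_r - F(u_r))\bigr]^{-1} \to \infty$ as $a \to \frac{2}{3\sqrt{3}}$, so the constraint $\lambda_n < \lambda_h(a_n)$ is perfectly compatible with $\lambda_n \to \infty$, and the small-amplitude-near-the-centre argument used at the end of the proof of Lemma~\ref{lem:nomoreclassical} does \emph{not} directly yield smallness of $u_n - 1/\sqrt{3}$. The resolution is instead the uniform-in-$\lambda$ collapse of the loop itself, which is driven entirely by the algebraic coalescence of $u_\star$, $c$ and $u_r$ at $u = 1/\sqrt{3}$ as $a \to \frac{2}{3\sqrt{3}}$.
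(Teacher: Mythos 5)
Your proposal is correct and follows essentially the same route as the paper: assume the contrary, extract a sequence $\lambda_n\to\infty$ with $a^*(\lambda_n)<\tilde a(\lambda_n)<\frac{2}{3\sqrt{3}}$, use $a^*(\lambda)\to\frac{2}{3\sqrt{3}}$ to squeeze $\tilde a(\lambda_n)\to\frac{2}{3\sqrt{3}}$, and conclude that the fixed mass $M<\frac{1}{\sqrt{3}}$ would be forced to equal $\frac{1}{\sqrt{3}}$, a contradiction. The paper asserts this final implication without detail, while you justify it via the collapse of the homoclinic loop (and of the confinement region $\gamma^*_\lambda$) onto $\left(\frac{1}{\sqrt{3}},0\right)$ as $a\to\frac{2}{3\sqrt{3}}$, which is a correct filling-in of the step the paper leaves implicit.
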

\begin{proof}
Let $\tilde{a}(\lambda)$ denote the bifurcation curve of monotone classical solutions to~(\ref{eq:masscon}) in the $(\lambda,a)$-plane and assume the contrary to Lemma~\ref{lem:a bar M}. Then there exists a sequence $(\lambda_n)_{n=1}^\infty$ such that $\lambda_n\rightarrow\infty$ as $n\rightarrow\infty$ and
\begin{eqnarray}
 a^*(\lambda_n)<\tilde{a}(\lambda_n)<\frac{2}{3\sqrt{3}},\nonumber
\end{eqnarray}
for all $n\in\mathbb{N}$ sufficiently large. Then, since $a^*(\lambda_n)\rightarrow\frac{2}{3\sqrt{3}}$ as $n\rightarrow\infty$, we have by the sandwich theorem that 
\begin{eqnarray}\label{eq:atilde}
 \tilde {a}(\lambda_n)\rightarrow\frac{2}{3\sqrt{3}}\hspace{0.1in}\textup{as}\,\,\,n\rightarrow\infty.
\end{eqnarray}
But $M$ is constant along $\tilde{a}(\lambda)$ and~(\ref{eq:atilde}) implies that $M$ must be equal to $\frac{1}{\sqrt{3}}$ which is a contradiction. 
\end{proof}

 We now have the following theorem regarding (monotone) classical solutions to the non-local mass-conserving equation~(\ref{eq:masscon}):
\begin{thm}\label{thm:nomoreclassical}
For fixed $L$ and $0\le M<\frac{1}{\sqrt{3}}$, there exists a value $\lambda_1(M,L)$ such that for $\lambda>\lambda_1(M,L)$ there cannot exist monotone classical solutions to the non-local mass-conserving equation in~(\ref{eq:masscon}).
\end{thm}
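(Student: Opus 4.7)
The plan is to argue by contradiction, combining Lemmas~\ref{lem:nomoreclassical} and~\ref{lem:a bar M}. Suppose that at some $\lambda$ there is a non-constant monotone classical solution $u$ of the non-local problem~(\ref{eq:masscon}) with mass $M$, $0\le M<1/\sqrt{3}$, and set
\[
a:=\frac{1}{L}\int_0^L f(u(x))\,dx.
\]
Integrating~(\ref{eq:masscon}) over $(0,L)$ shows that $u$ is also a non-constant monotone classical solution of the ancillary problem~(\ref{eq:ancprob}) at this value of $a$, so the pair $(\lambda,a)$ lies on the bifurcation curve $\tilde a(\lambda)$ of monotone classical mass-$M$ solutions introduced just before Lemma~\ref{lem:a bar M}.

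I would then show that $a$ is confined to the open strip $a^*(\lambda)<a<2/(3\sqrt{3})$. The upper bound follows from the phase portrait of~(\ref{system}): for $a\ge 2/(3\sqrt{3})$ the polynomial $f(u)-a$ has no root at which $f'>0$, so the phase plane contains no centre, no homoclinic loop encircling a centre, and no periodic orbit; the Neumann boundary conditions then force $u$ to be constant, contradicting non-triviality. The lower bound comes from Lemma~\ref{lem:nomoreclassical}: a classical monotone solution of~(\ref{eq:ancprob}) can only exist for $\lambda<\lambda^*(a,L)$ strictly, because at the threshold value itself the solution already has infinite gradient at some interior point and drops out of $C^2$ (as the remark following Lemma~\ref{lem:nomoreclassical} records). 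Translating this through the definition of $a^*(\lambda)$ as the inverse of $\lambda^*(\cdot,L)$ yields $a>a^*(\lambda)$ strictly.

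The finishing blow is then immediate: Lemma~\ref{lem:a bar M} states that once $\lambda$ exceeds a threshold depending on $M$ and $L$, the bifurcation curve $\tilde a(\lambda)$ avoids precisely the open strip $(a^*(\lambda),2/(3\sqrt{3}))$. Since we have just placed $a=\tilde a(\lambda)$ inside this strip, we have a contradiction, and $\lambda_1(M,L)$ can be taken to be the threshold supplied by Lemma~\ref{lem:a bar M}. The only genuinely delicate point is the strict inequality $a>a^*(\lambda)$; once this is extracted from the threshold characterisation in Lemma~\ref{lem:nomoreclassical}, the theorem is a short pigeonhole-style corollary of the two preceding lemmas, requiring no further phase-plane analysis beyond the trivial observation that $a\ge 2/(3\sqrt 3)$ excludes all non-constant classical solutions.
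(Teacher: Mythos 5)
Your argument is correct (at the same level of rigour as the paper operates at) but it is a genuinely different route. The paper does not argue pointwise as you do: it tracks the global branch $\tilde{a}(\lambda)$ of monotone classical solutions in the $(\lambda,a)$-plane, invokes the Rabinowitz global bifurcation theorem to assert that this branch ``has to go somewhere'' as $\lambda$ increases, deduces from Lemma~\ref{lem:a bar M} that it must therefore meet the curve $a^*(\lambda)$, and then spends the bulk of the proof ruling out the possibility that the branch slips into the nonexistence region through a point of \emph{discontinuity} of $a^*(\lambda)$; the value $\lambda_1(M,L)$ is then identified as the $\lambda$-value where $\tilde{a}(\lambda)$ terminates on $a^*(\lambda)$, which is the quantity computed numerically with AUTO later in the paper. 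Your proof dispenses with Rabinowitz and with the continuity discussion entirely: you take $\lambda_1(M,L)$ to be the (possibly larger, non-explicit) threshold furnished by Lemma~\ref{lem:a bar M} and run a trichotomy on $a=\frac{1}{L}\int_0^L f(u)\,dx$ --- the case $a\ge\frac{2}{3\sqrt{3}}$ is excluded by the first-integral/phase-plane argument for~(\ref{system}), the case $a\le a^*(\lambda)$ by Lemma~\ref{lem:nomoreclassical}, and the remaining open strip by Lemma~\ref{lem:a bar M}. This is shorter and arguably cleaner for the bare existence statement of Theorem~\ref{thm:nomoreclassical}, whereas the paper's heavier machinery buys the sharper picture of \emph{where} the branch of classical solutions dies. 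Be aware, though, that your ``delicate point'' ($\lambda<\lambda^*(a,L)$ strictly, translated into $a>a^*(\lambda)$ via inversion) tacitly assumes that $\lambda^*(\cdot,L)$ is a sharp, monotone (hence invertible) threshold; the paper makes exactly the same unproved assumption when it defines $a^*(\lambda)$ as ``the inverse of $\lambda^*(a)$'' and asserts that this curve separates the existence and nonexistence regions, and likewise both you and the paper implicitly take all monotone classical solutions of~(\ref{eq:masscon}) to be accounted for by the single curve $\tilde{a}(\lambda)$ to which Lemma~\ref{lem:a bar M} applies. So your proposal stands or falls with the same implicit hypotheses as the published argument, and given those it is sound.
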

\begin{proof}
  In the $(\lambda,a)$-plane, the curve $a^*(\lambda)$ obtained from Lemma~\ref{lem:nomoreclassical} separates two regions; a region in which monotone classical solutions to the non-local mass-conserving equation~(\ref{eq:masscon}) can exist for fixed $L>0$ and $0\le M<\frac{1}{\sqrt{3}}$ and a region in which monotone classical solutions to~(\ref{eq:masscon}) cannot exist for such $L$ and $M$. As in Lemma~\ref{lem:a bar M}, let $\tilde{a}(\lambda)$ denote the bifurcation curve of monotone classical solutions to~(\ref{eq:masscon}). We know by Lemma~\ref{lem:a bar M}, that $\tilde{a}(\lambda)$ {\em must} intersect the curve $a^*(\lambda)$ at some point since $\tilde{a}(\lambda)$ cannot not exist in the region defined by $a^*(\lambda)<a<\frac{2}{3\sqrt{3}}$ in the $(\lambda,a)$-plane for sufficiently large $\lambda$ and from the Rabinowitz theorem (see \cite[Theorem 13.10]{SMOLLER}), it has to go {\em somewhere} as $\lambda$ increases. We want to prove that $\tilde{a}(\lambda)$ intersects $a^*(\lambda)$ at a point of continuity of $a^*(\lambda)$.  
\begin{figure}[H]
 \begin{center}
  \scalebox{0.36}
{\psfrag{l}{\Huge $\lambda$}
\psfrag{0}{\Huge $0$}
\psfrag{a}{\Huge $a$}
\psfrag{a1}{\Huge $a^*(\lambda)$}
\psfrag{a2}{\Huge $\tilde{a}(\lambda)$}
\psfrag{a3}{\Huge $\frac{2}{3\sqrt{3}}$}
\psfrag{C}{\Large $\textup{CLASSICAL SOLUTIONS}$}
\psfrag{C2}{\Large $\textup{TO~(\ref{eq:masscon})}$}
\psfrag{l2}{\Huge $\lambda^*(0,L)$}
\psfrag{l3}{\Huge $\lambda_d^*$}
\psfrag{N}{\Large $\textup{NO CLASSICAL SOLUTIONS}$}
\psfrag{B}{\Huge $\textup{Supposed bifurcation curve for~(\ref{eq:masscon})}$}
\psfrag{L}{\Huge $(\textup{for fixed \textit{L} and \textit{M}})$}
\includegraphics{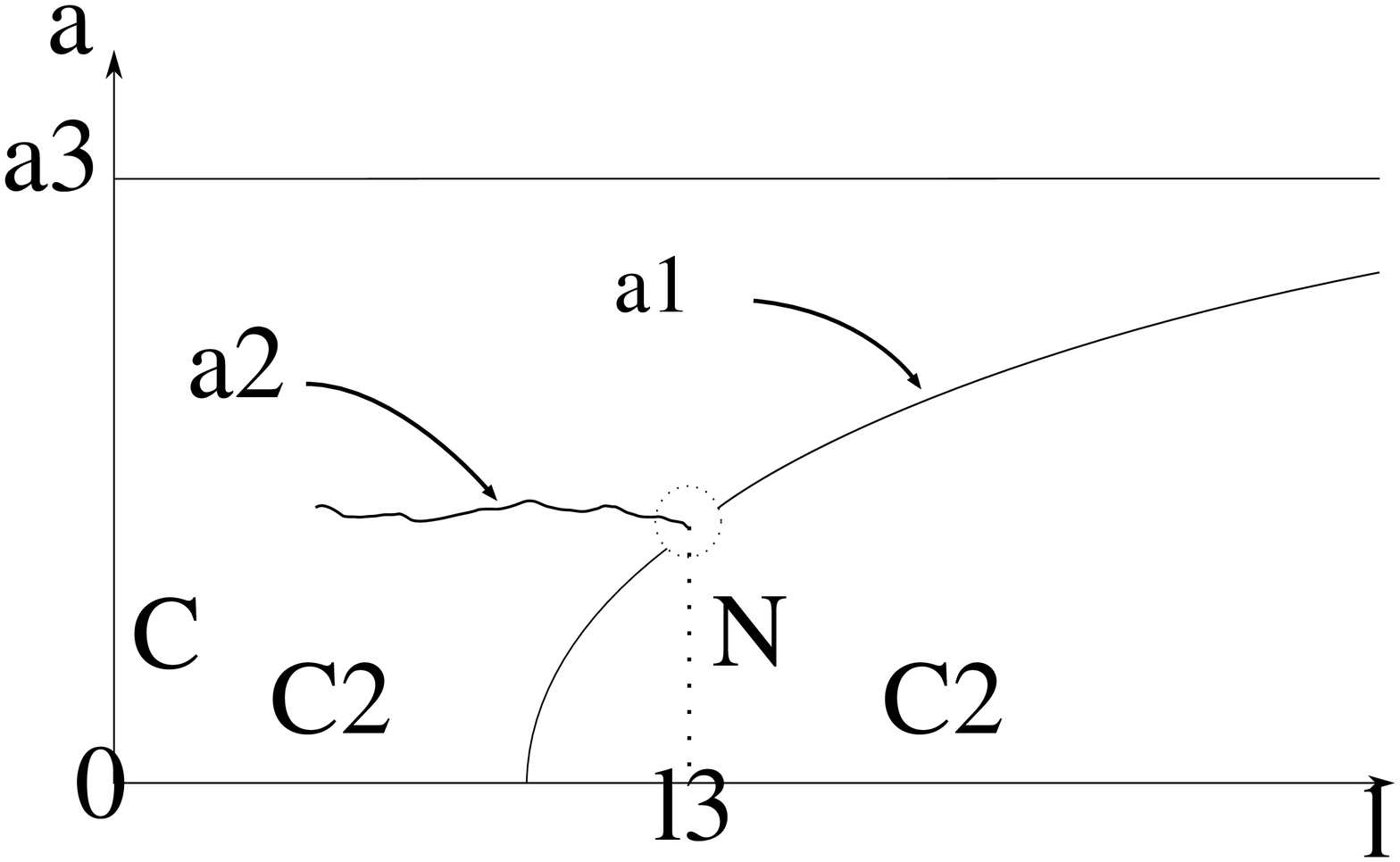}}
 \end{center}
\caption{The curve $a^*(\lambda)$ and a proposed bifurcation curve $\tilde{a}(\lambda)$ of classical monotone solutions to~(\ref{eq:masscon}) intersecting $a^*(\lambda)$ at a supposed point of discontinuity of $a^*(\lambda)$.} 
\label{lambdastara2}
\end{figure}
We assume the contrary and consider the bifurcation curve $\tilde{a}(\lambda)$ of monotone classical solutions to~(\ref{eq:masscon}) in the $(\lambda,a)$-plane as we have it in Figure~\ref{lambdastara2}. We assume that the curve $a^*(\lambda)$ is discontinuous at some value $\lambda_d^*$ and that the bifurcation curve $\tilde{a}(\lambda)$ enters the region of no classical solutions to~(\ref{eq:masscon}) at this point of discontinuity of $a^*(\lambda)$. By the Rabinowitz theorem \cite[Theorem 13.10]{SMOLLER} 
there must be a classical solution in a neighbourhood of this point $\lambda_d^*$ and the bifurcation curve of classical solutions can be extended. However, we would then have entered into the region in which there can be no classical solutions to~(\ref{eq:masscon}) and so we have obtained a contradiction. Hence the bifurcation curve must intersect the curve $a^*(\lambda)$ at a point of continuity and the theorem is proven.\\
\end{proof}
\textbf{Remark}: Although we established Theorem~\ref{thm:nomoreclassical} for monotone classical solutions to~(\ref{eq:masscon}), the result can be generalised to non-monotone classical solutions $u(x)$ to~(\ref{eq:masscon}) which have $n$ points of inflection in $(0,L)$ for some $n\in\mathbb{N}$. Thus for {\em each} $n\in\mathbb{N}$, there exists a value $\lambda_n(M,L)$ such that for $\lambda>\lambda_n(M,L)$, there are no classical solutions to~(\ref{eq:masscon}) with $n$ points of inflection in $(0,L)$. However, we point out that unlike the situation in the (semilinear) Rubinstein-Sternberg equation \cite{EFG}, there is no obvious way to deduce the behaviour of a particular branch of solutions to~(\ref{eq:masscon}) from that of the monotone one. Also, in the proof of Theorem~\ref{thm:nomoreclassical} we did not establish that the curve $a^*(\lambda)$ is continuous but we did show that it must be continuous at the point $\lambda_d^*$ where the bifurcation curve of monotone classical solutions to~(\ref{eq:masscon}) intersects it. \\

We have not given a precise depiction of the bifurcation curve of monotone classical solutions to the mass-conserving problem~(\ref{eq:masscon}) in the $(\lambda,a)$-plane for given $L$ and $0\le M<\frac{1}{\sqrt{3}}$. Hence for fixed $L$ and $M$ subcritical and then for fixed $L$ and $M$ supercritical, we numerically obtain the curve $a^*(\lambda)$ from Lemma~\ref{lem:nomoreclassical} and the bifurcation curve $\tilde{a}(\lambda)$ of monotone classical solutions to~(\ref{eq:masscon}) and plot these in the $(\lambda,a)$-plane. With regards to problem~(\ref{eq:masscon}), we know that along the line of trivial solutions, $a=\frac{1}{L}\int_0^Lf(u(x))\,dx$ is constant and equal to $M-M^3$. Also, given $L$, for each $0\le M<\frac{1}{\sqrt{3}}$, bifurcation from the trivial solution of~(\ref{eq:masscon}) occurs when $\lambda=\frac{\pi^2}{L^2f'(M)}$ which we can rearrange to
\begin{eqnarray}
 M=\frac{1}{\sqrt{3}}\sqrt{1-\frac{\pi^2}{L^2\lambda}}\nonumber,
\end{eqnarray}
and since bifurcation points appear along the line of trivial solutions (upon which $a=M-M^3$) we can plot a curve of bifurcation points for fixed $L$ in the $(\lambda,a)$-plane by considering the function
\begin{eqnarray}
 a_b(\lambda)\!&=&\!M-M^3\nonumber \\
\!&=&\!\frac{1}{3\sqrt{3}}\sqrt{1-\frac{\pi^2}{L^2\lambda}}\left(2+\frac{\pi^2}{L^2\lambda}\right).\nonumber
\end{eqnarray}
For a given $L$, we can numerically plot the curve $a^*(\lambda)$ in the $(\lambda,a)$-plane by solving~(\ref{eq:masscon}) for a variety of $M\in\left[0,\frac{1}{\sqrt{3}}\right)$ using path-following methods of AUTO \cite{AUTO} and determine for each $M$ the value of $\lambda$ and the value of  $a=\frac{1}{L}\int_0^Lf(u)\,dx$ for which the stationary solutions to~(\ref{eq:masscon}) develop infinite gradient in $(0,L)$ and then plot $a$ against $\lambda$. We can then use AUTO again for fixed $L$ and a particular $M\in\left[0,\frac{1}{\sqrt{3}}\right)$ to plot the bifurcation curve $\tilde{a}(\lambda)$  of monotone classical solutions to~(\ref{eq:masscon}) in the $(\lambda,a)$-plane and obtain the value $\lambda_1(M,L)$ of Theorem~\ref{thm:nomoreclassical}.\\

Thus we fix $L=1$ and take $M=0.1$ so that the bifurcation at $\lambda=\frac{\pi^2}{f'(0.1)}= 10.1749$ from the trivial solution is subcritical and $a=0.099$ along the line of trivial solutions to~(\ref{eq:masscon}). According to AUTO, the branch of monotone classical solutions to~(\ref{eq:masscon}) for these parameter values stops when $\lambda=\lambda_1(0.1,1)=5.6579$ with $\tilde{a}(\lambda_1(0.1,1))=a^*(\lambda_1(0.1,1))=0.0289$ and we have plotted all relevant curves for these values of $L$ and $M$ in the $(\lambda,a)$-plane in Figure~\ref{laplane}. 
\begin{figure}[H]
 \begin{center}
  \scalebox{0.7}
{\psfrag{l}{\large $\lambda$}
\psfrag{a}{\large $a$}
\psfrag{a1}{\large $0.099$}
\psfrag{a2}{\large $\tilde{a}(\lambda)$}
\psfrag{a3}{\large $a^*(\lambda)$}
\psfrag{a4}{\large $\frac{2}{3\sqrt{3}}$}
\psfrag{a5}{\large $a_b(\lambda)$}
\psfrag{0}{\large $0$}
\psfrag{5}{\large $5$}
\psfrag{10}{\large $10$}
\psfrag{15}{\large $15$}
\psfrag{20}{\large $20$}
\includegraphics{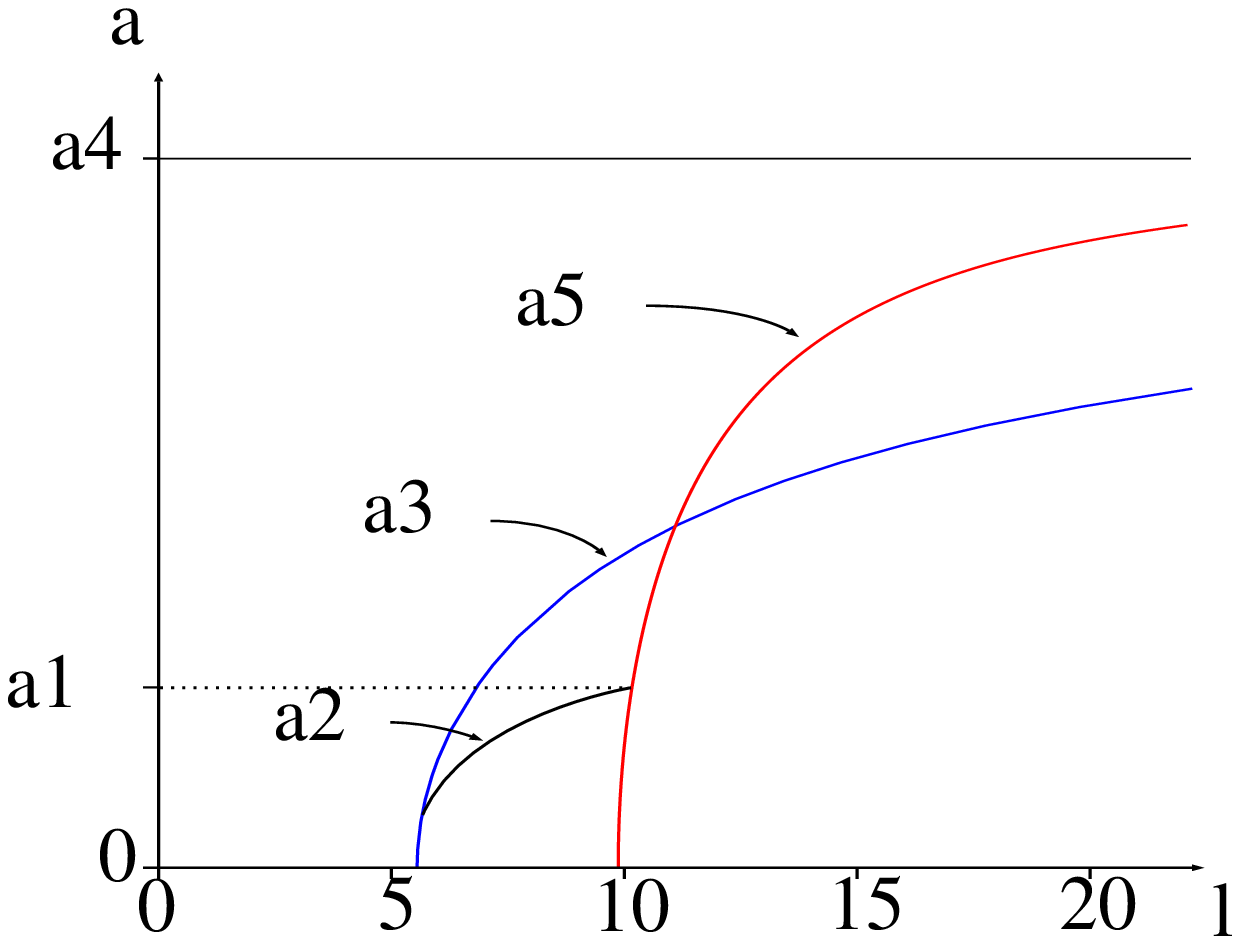}
}
 \end{center}
\caption{The bifurcation curve $\tilde{a}(\lambda)$ of monotone classical solutions to~(\ref{eq:masscon}) for $L=1$, $M=0.1$ and the curves $a_b(\lambda)$ and $a^*(\lambda)$ in the $(\lambda,a)$-plane.}
\label{laplane}
\end{figure}
Now consider $L=2.5$, $M=0.3$ for which $a=0.273$ along the line of trivial solutions to~(\ref{eq:masscon}). In Figure~\ref{laplane2} we plot all relevant curves in the $(\lambda,a)$-plane for these values of $L$ and $M$. We have a supercritical bifurcation from the trivial solution when $\lambda=\frac{\pi^2}{2.5^2f'(0.3)}= 2.1632$ and according to AUTO, the branch of monotone classical solutions in this case stops when $\lambda=\lambda_1(0.3,2.5)=4.0860$ with $\tilde{a}(\lambda_1(0.3,2.5))=a^*(\lambda_1(0.3,2.5))=0.0051$.
\begin{figure}[H]
 \begin{center}
  \scalebox{0.7}
{\psfrag{l}{\large $\lambda$}
\psfrag{a}{\large $a$}
\psfrag{a1}{\large $0.273$}
\psfrag{a2}{\large $\tilde{a}(\lambda)$}
\psfrag{a3}{\large $a^*(\lambda)$}
\psfrag{a4}{\large $\frac{2}{3\sqrt{3}}$}
\psfrag{a5}{\large $a_b(\lambda)$}
\psfrag{0}{\large $0$}
\psfrag{5}{\large $5$}
\psfrag{10}{\large $10$}
\psfrag{15}{\large $15$}
\psfrag{20}{\large $20$}
\includegraphics{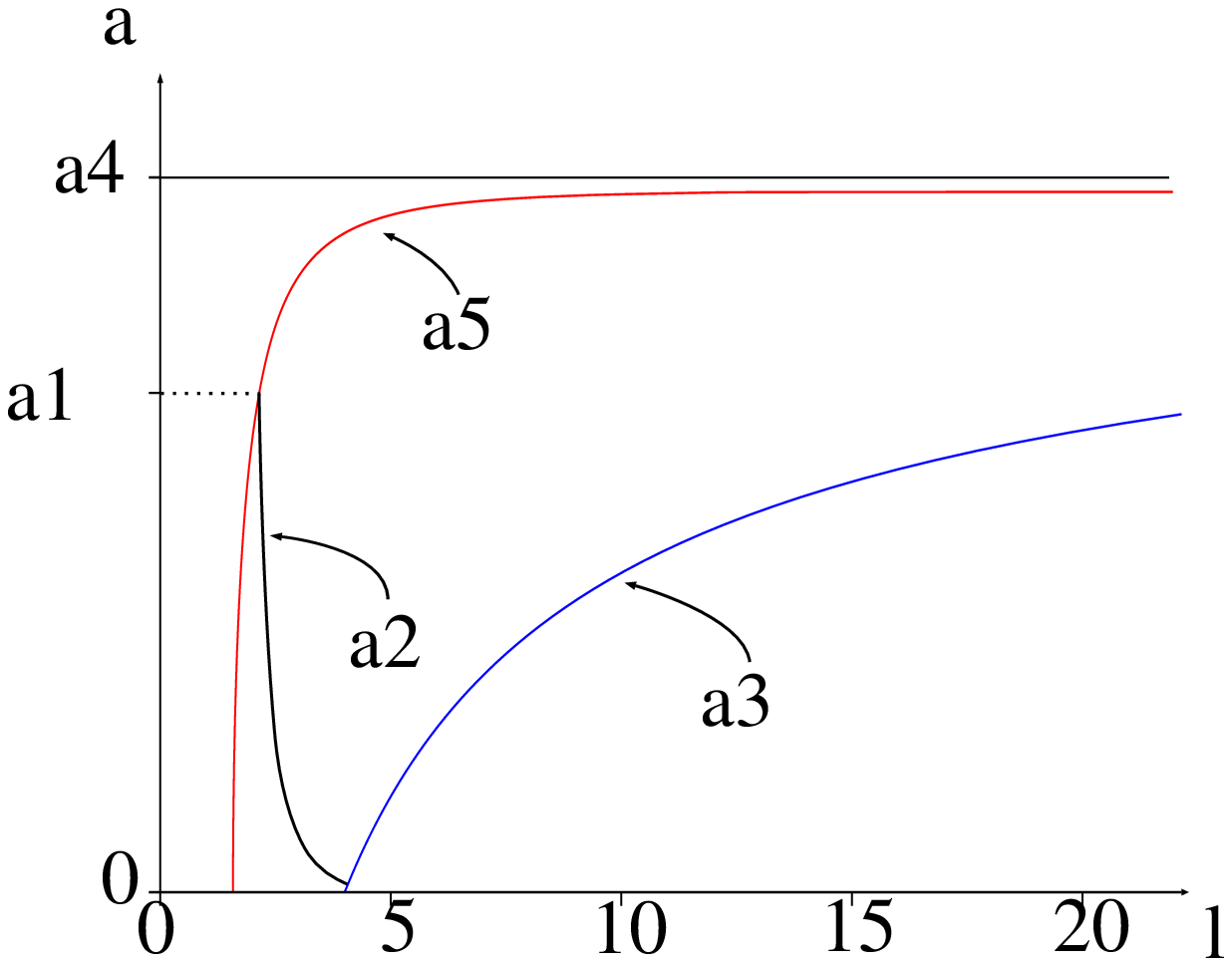}
}
 \end{center}
\caption{The bifurcation curve $\tilde{a}(\lambda)$ of monotone classical solutions to~(\ref{eq:masscon}) for $L=2.5$, $M=0.3$ and the curves $a_b(\lambda)$ and $a^*(\lambda)$ in the $(\lambda,a)$-plane.}
\label{laplane2}
\end{figure}
On the basis of the preceding numerical experiments we would conjecture that the bifurcation curve $\tilde{a}(\lambda)$ of (monotone) classical solutions to~(\ref{eq:masscon}) is monotonic and for given $L$ and $M$, $a$ is at its largest along the line of trivial solutions i.e. when $a=M-M^3$.\\

\textbf{Remark:} We have not discussed the case of having $\frac{1}{\sqrt{3}}<|M|<1$ in~(\ref{eq:masscon}), i.e. what happens in the ``metastable'' regime in which there are no local bifurcations from the trivial solution and for which the trivial solution is always stable. As $|M|\rightarrow\frac{1}{\sqrt{3}}$, $f'(M)\rightarrow0$ and the first and all subsequent bifurcation points $\lambda_k=\frac{k^2\pi^2}{L^2f'(M)}$ go off to infinity. In the semilinear situation studied in \cite{EFG}, in passing from the spinodal to the metastable regime, they show through spectral approximations and path-following methods that the saddle-nodes which exist for $\frac{1}{\sqrt{5}}\le |M|<\frac{1}{\sqrt{3}}$, move off to the right as $f'(M)\rightarrow0^+$ 
but at a speed much slower than that of the bifurcation points. In our case, for all $L$ with $M$ just less than $\frac{1}{\sqrt{3}}$ so that the bifurcation from the trivial solution is subcritical by Proposition~\ref{prop:mc direction of pitchfork}, the classical solutions to~(\ref{eq:masscon}) stop existing {\em before} we reach a saddle-node. Hence we were not able to perform a two parameter continuation in $\lambda$ and in $M$ of the saddle-nodes for $|M|$ beyond $\frac{1}{\sqrt{3}}$. We can however say what happens in the parameter regime $|M|\ge1$. By the remark after Lemma~\ref{lem:nomoreclassical} we see that it is not possible to construct a non-trivial classical solution to the ancillary problem~(\ref{eq:ancprob}) which will have average mass $M$ such that $|M|\ge1$. One can also see from phase portraits associated with~(\ref{system}) that it is also not possible to construct a non-classical solution to~(\ref{eq:ancprob}) for any $|a|<\frac{2}{3\sqrt{3}}$ which will have $|M|\ge1$. Therefore for $|M|\ge1$, there are no non-trivial solutions to~(\ref{eq:masscon}) for any $\lambda\in(0,\infty)$ which is also true of the semilinear problem (see \cite{EFG}).

\section{Numerics} In this section we carry out some numerical experiments for the non-local mass-conserving equation in~(\ref{eq:massconrose}) in order top see what (stable) non-classical solutions of~(\ref{eq:massconrose}) look like. We first derive an explicit numerical method to solve the equation~(\ref{eq:massconrose}) and we are grateful to Dr John Mackenzie of The University of Strathclyde for discussions on the numerical scheme we outline in the following subsection.
\subsection*{Numerical approximation}\label{sect:numerical scheme} 
We derive a mass-conserving numerical scheme to solve
\begin{eqnarray}\label{problem}
 u_t\!&=&\!\left(\frac{u_{x}}{\sqrt{1+u_x^2}}\right)_x+\lambda f(u)-\frac{\lambda}{|\Omega|}\int_\Omega f(u)\,dx,\hspace{0.1in}(x,t)\in \Omega\times(0,T) \nonumber \\
 u_x(0,t)\!&=&\!u_x(L,t)=0,\hspace{0.15in}(x,t)\in\partial \Omega\times(0,T)  \\
 u(x,0)\!&=&\!u_0(x),\hspace{0.5in}x\in\Omega,\nonumber
\end{eqnarray}
where $\lambda\in(0,\infty)$, $\Omega=(0,L)$, $L>0$, $f(s)=s-s^3$ and $0<T<\infty$. We obtain~(\ref{problem}) from~(\ref{eq:massconrose}) by multiplying~(\ref{eq:massconrose}) by $\frac{1}{\epsilon}$ and scaling time as $t\mapsto \epsilon t$. One can easily show that the non-local equation~(\ref{problem}) conserves mass and to hope to have the same at the discrete level we must discretise the equation in conservative form, i.e. in the form given in~(\ref{problem}). \\

We discretise the space interval $\Omega$ into $N+1$ evenly spaced points
\begin{equation}
 0=x_1<x_2<\ldots<x_{N+1}=L,\nonumber
\end{equation}
so that $\Delta x=\frac{L}{N}$ and we regard there as being cells $[x_{i-\frac{1}{2}},x_{i+\frac{1}{2}}]$ of width $\Delta x$ around each {\em internal} point $x_i$, $i=2,\cdots,N$ while at the boundaries $i=1$ and $i=N+1$ we use cells $[x_1,x_{\frac{3}{2}}]$ and $[x_{N+\frac{1}{2}},x_{N+1}]$ of half width $\frac{\Delta x}{2}$ as in Figure~\ref{discretize}.
\begin{figure}[H]
\begin{center}
\scalebox{0.5}{\psfrag{i1}{\Huge $x_1$}
\psfrag{i1.5}{\Huge $x_{2-\frac{1}{2}}$}
\psfrag{i2}{\Huge $x_2$}
\psfrag{i2.5}{\Huge $x_{2+\frac{1}{2}}$}
\psfrag{i3}{\Huge $x_3$}
\psfrag{i4}{\Huge $x_4$}
\psfrag{iN-0.5}{\Huge $x_{N-\frac{1}{2}}$}
\psfrag{iN}{\Huge $x_N$}
\psfrag{iN.5}{\Huge $x_{N+\frac{1}{2}}$}
\psfrag{iN+1}{\Huge $x_{N+1}$}
\psfrag{0}{\Huge $0$}
\psfrag{L}{\Huge $L$}
\psfrag{dots}{\Huge $\cdots$}
\includegraphics{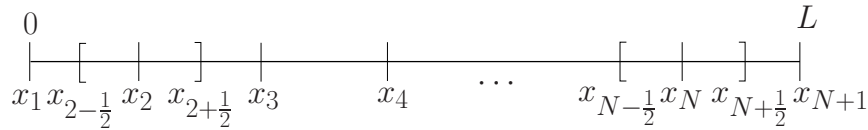}}
\end{center}
\caption{Discretisation of the space interval $[0,L]$.} 
\label{discretize}
\end{figure}
 At a particular $x_i\in\Omega=(0,L)$ (hence $i\in\{2\cdots N\}$), $t_n\in(0,T)$ the equation is given by
\begin{eqnarray}
 u_t(x_i,t_n)=\left(\frac{u_x(x_i,t_n)}{\sqrt{1+u_x^2(x_i,t_n)}}\right)_x+\lambda f(u(x_i,t_n))-\frac{\lambda}{L}\int_0^Lf(u(x,t_n))\,dx.\nonumber
\end{eqnarray}
Let $v(x,t)$ denote the flux, i.e.
\begin{eqnarray}
 v(x,t)=\frac{u_x(x,t)}{\sqrt{1+u_x^2(x,t)}}\nonumber
\end{eqnarray}
so that at some $x_i\in\Omega=(0,L)$, $t_n\in(0,T)$
\begin{eqnarray}\label{eq:discretized flux term}
 \left(\frac{u_x(x_i,t_n)}{\sqrt{1+u_x^2(x_i,t_n)}}\right)_x\!&=&\!v_x(x_i,t_n)\nonumber \\
 \!&\simeq&\!\frac{v(x_{i+\frac{1}{2}},t_n)-v(x_{i-\frac{1}{2}},t_n)} {\Delta x}
\end{eqnarray}
that is, we approximate the flux term by a first central difference in space at the point $x_i$ with half-spacing.\\

 Now,  
\begin{eqnarray}\label{eq:xi+1/2}
 v(x_{i+\frac{1}{2}},t_n)\!&=&\!\frac{u_x(x_{i+\frac{1}{2}},t_n)}{\sqrt{1+u_x^2(x_{i+\frac{1}{2}},t_n)}}\nonumber \\
 \!&\simeq&\! \frac{(u(x_{i+1},t_n)-u(x_i,t_n))/\Delta x}{\sqrt{1+\left[\frac{u(x_{i+1},t_n)-u(x_i,t_n)}{\Delta x}\right]^2}}
\end{eqnarray}
and 
\begin{eqnarray}\label{eq:xi-1/2}
  v(x_{i-\frac{1}{2}},t_n)\!&=&\!\frac{u_x(x_{i-\frac{1}{2}},t_n)}{\sqrt{1+u_x^2(x_{i-\frac{1}{2}},t_n)}}\nonumber \\
 \!&\simeq&\! \frac{(u(x_i,t_n)-u(x_{i-1},t_n))/\Delta x}{\sqrt{1+\left[\frac{u(x_i,t_n)-u(x_{i-1},t_n)}{\Delta x}\right]^2}}
\end{eqnarray}
so that in~(\ref{eq:xi+1/2}) we approximate the term $v(x_{i+\frac{1}{2}},t_n)$ by a first central difference in space about the point $x_{i+\frac{1}{2}}$ with half-spacing and in~(\ref{eq:xi-1/2}) we approximate the term $v(x_{i-\frac{1}{2}},t_n)$ by a first central difference in space about the point $x_{i-\frac{1}{2}}$ with half-spacing. Therefore by~(\ref{eq:discretized flux term}),~(\ref{eq:xi+1/2}) and~(\ref{eq:xi-1/2})
\begin{eqnarray}
 \left(\frac{u_x(x_i,t_n)}{\sqrt{1+u_x^2(x_i,t_n)}}\right)_x\!&=&\!v_x(x_i,t_n)\nonumber \\
 &\simeq&\frac{1}{\Delta x^2}\left(\frac{(u(x_{i+1},t_n)-u(x_i,t_n))}{\sqrt{1+\left[\frac{u(x_{i+1},t_n)-u(x_i,t_n)}{\Delta x}\right]^2}}-\frac{(u(x_i,t_n)-u(x_{i-1},t_n))}{\sqrt{1+\left[\frac{u(x_i,t_n)-u(x_{i-1},t_n)}{\Delta x}\right]^2}}\right).
 \nonumber
\end{eqnarray}
We approximate the integral term in~(\ref{problem}) at some given time $t_n$ using the midpoint rule for numerical integration as follows
\begin{eqnarray}
 \int_0^L f(u(x,t_n))\,dx\!&=&\!\int_{x_1}^{x_{\frac{3}{2}}}f(u(x,t_n))\,dx+\int_{x_{\frac{3}{2}}}^{x_{\frac{5}{2}}}f(u(x,t_n))\,dx+\ldots+\int_{x_{i-\frac{1}{2}}}^{x_{i+\frac{1}{2}}}f(u(x,t_n))\,dx\nonumber \\
&&\hspace{0.1in}+\ldots+\int_{x_{N-\frac{1}{2}}}^{x_{N+\frac{1}{2}}}f(u(x,t_n))\,dx+\int_{x_{N+\frac{1}{2}}}^{x_{N+1}}f(u(x,t_n))\,dx\nonumber \\
 &\simeq&\frac{\Delta x}{2}f(u(x_1,t_n))+\sum_{j=2}^N\Delta xf(u(x_j,t_n))+\frac{\Delta x}{2}f(u(x_{N+1},t_n)).\nonumber
\end{eqnarray}
Let $u(x_i,t_n)=u_i^n$ so that using the forward Euler method, we approximate~(\ref{problem}) at interior points $x_i\in(0,L)$, $i=2,\cdots,N$ by the numerical scheme
\begin{eqnarray}\label{eq:interior}
 u_i^{n+1}=u_i^n&+\Delta t\Bigg(\frac{1}{\Delta x^2}\left[\frac{u_{i+1}^n-u_i^n}{\sqrt{1+\left(\frac{u_{i+1}^n-u_i^n}{\Delta x}\right)^2}}-\frac{u_{i}^n-u_{i-1}^n}{\sqrt{1+\left(\frac{u_{i}^n-u_{i-1}^n}{\Delta x}\right)^2}}\right]+\lambda f(u_i^n)\nonumber \\
 &-\frac{\lambda}{L}\left[\frac{\Delta x}{2}f(u_1^n)+\sum_{j=2}^N\Delta xf(u_j^n)+\frac{\Delta x}{2}f(u_{N+1}^n)\right]\Bigg),
\end{eqnarray}
for $i=2,\ldots,N$. At the boundary points $x=0$ and $x=L$ we consider the discretisation in the half-cells $[x_1,x_{\frac{3}{2}}]$ and $[x_{N+\frac{1}{2}},x_{N+1}]$ respectively, so that we have
\begin{eqnarray}\label{eq:leftboundary}
 u_1^{n+1}=u_1^{n}\!&+&\!\Delta t\Bigg(\frac{2}{\Delta x^2}\frac{u_2^n-u_1^n}{\sqrt{1+\left[\frac{u_2^n-u_1^n}{\Delta x}\right]^2}}+\lambda f(u_1^n)\nonumber \\
\!&-&\!\frac{\lambda}{L}\left[\frac{\Delta x}{2}f(u_1^n)+\sum_{j=2}^N\Delta x f(u_j^n) + \frac{\Delta x}{2}f(u_{N+1}^n)\right]\Bigg),
\end{eqnarray}
and
\begin{eqnarray}\label{eq:rightboundary}
 u_{N+1}^{n+1}\!=\!u_{N+1}^{n}\!\!\!&+&\!\!\!\Delta t\Bigg(-\frac{2}{\Delta x^2}\frac{u_{N+1}^n-u_N^n}{\sqrt{1+\left[\frac{u_{N+1}^n-u_N^n}{\Delta x}\right]^2}}+\lambda f(u_{N+1}^n)\nonumber \\
\!\!\!&-&\!\!\!\frac{\lambda}{L}\left[\frac{\Delta x}{2}f(u_{1}^n)+\sum_{j=2}^N\Delta x f(u_j^n) + \frac{\Delta x}{2}f(u_{N+1}^n)\right]\!\Bigg).
\end{eqnarray}

For the above numerical scheme to approximate~(\ref{problem}) reasonably it must also conserve mass. Therefore we have the following theorem.
\begin{thm}
The explicit numerical scheme contained in~(\ref{eq:interior}),~(\ref{eq:leftboundary}),~(\ref{eq:rightboundary}) conserves mass.
\end{thm}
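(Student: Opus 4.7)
The plan is to introduce the natural discrete mass
$$M^n := \frac{\Delta x}{2}u_1^n + \Delta x\sum_{j=2}^N u_j^n + \frac{\Delta x}{2}u_{N+1}^n,$$
so that $M^n$ is the midpoint/trapezoidal quadrature of $u(\cdot,t_n)$ consistent with the quadrature $I_f^n$ used to approximate the integral of $f(u)$ in the scheme, and then to show directly that $M^{n+1}=M^n$. The strategy is to multiply each update equation by the weight of the corresponding node in the quadrature, sum, and verify that the two separate contributions (diffusive flux versus reaction plus non-local term) vanish independently.

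First I would handle the diffusive flux terms. Writing $V_{i+\frac12}^n$ for the discrete flux $(u_{i+1}^n-u_i^n)/\Delta x$ divided by $\sqrt{1+((u_{i+1}^n-u_i^n)/\Delta x)^2}$, the contribution of the flux term in (\ref{eq:interior}), weighted by $\Delta x$ and summed over interior $i$, telescopes to $\frac{\Delta t}{\Delta x}\bigl(V_{N+\frac12}^n-V_{\frac32}^n\bigr)$. The half-cell weight $\Delta x/2$ at $i=1$ combined with the factor $2/\Delta x^2$ in (\ref{eq:leftboundary}) contributes exactly $+\frac{\Delta t}{\Delta x}V_{\frac32}^n$, and similarly (\ref{eq:rightboundary}) contributes $-\frac{\Delta t}{\Delta x}V_{N+\frac12}^n$. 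The three pieces cancel — this is precisely why the half-cell discretisation at the endpoints is chosen: it encodes the Neumann condition discretely and makes the flux balance telescope to zero.

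Second, I would handle the reaction and non-local terms. The sum of the $\lambda f(u_i^n)$ terms, weighted by the quadrature weights $\Delta x/2,\Delta x,\ldots,\Delta x,\Delta x/2$, is exactly $\lambda\Delta t\,I_f^n$ where
$$I_f^n := \frac{\Delta x}{2}f(u_1^n) + \Delta x\sum_{j=2}^N f(u_j^n) + \frac{\Delta x}{2}f(u_{N+1}^n).$$
On the other hand, the non-local correction $-\frac{\lambda}{L}I_f^n$ is the same constant at every node, so its weighted sum equals $-\frac{\lambda\Delta t}{L}I_f^n\cdot\bigl(\frac{\Delta x}{2}+(N-1)\Delta x+\frac{\Delta x}{2}\bigr)=-\lambda\Delta t\,I_f^n$, using $N\Delta x=L$. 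These two contributions cancel exactly.

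Combining the two cancellations gives $M^{n+1}-M^n=0$, and iterating yields $M^n=M^0$ for all $n\ge 0$. No step is really a ``main obstacle''; the proof is essentially bookkeeping, but it is worth highlighting that the cancellations only work because the quadrature used for the non-local term and the half-cell boundary discretisation were designed in tandem — use of a different quadrature (or of full-width boundary cells) would break the cancellation in the non-local term and destroy discrete mass conservation.
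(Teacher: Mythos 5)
Your proposal is correct and takes essentially the same route as the paper: multiply the interior updates by the cell width $\Delta x$ and the boundary updates by the half-cell width $\Delta x/2$, let the flux differences telescope against the boundary flux terms, and cancel the weighted reaction sum against the non-local term using the fact that the quadrature weights sum to $N\Delta x=L$. (The only blemish is a harmless prefactor slip: with your stated definition of $V_{i+\frac12}^n$ the three flux contributions are $\Delta t\bigl(V_{N+\frac12}^n-V_{\frac32}^n\bigr)$, $+\Delta t\,V_{\frac32}^n$ and $-\Delta t\,V_{N+\frac12}^n$ rather than $\Delta t/\Delta x$ times these, but since the same factor appears in all three pieces the cancellation is unaffected.)
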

\begin{proof}
We multiply~(\ref{eq:interior}) by $\Delta x$ and sum over $i=2,3,\cdots,N$ to obtain
\begin{eqnarray}\label{eq:sum}
\sum_{i=2}^Nu_i^{n+1}\Delta x \!&=&\! \sum_{i=2}^Nu_i^n\Delta x+\Delta t \Bigg(\sum_{i=2}^N\frac{1}{\Delta x}\left[\frac{u_{i+1}^n-u_i^n}{\sqrt{1+\left(\frac{u_{i+1}^n-u_i^n}{\Delta x}\right)^2}}-\frac{u_{i}^n-u_{i-1}^n}{\sqrt{1+\left(\frac{u_{i}^n-u_{i-1}^n}{\Delta x}\right)^2}}\right]\nonumber \\
&+&\lambda\sum_{i=2}^Nf(u_i^n)\Delta x -\sum_{i=2}^N\Delta x\frac{\lambda}{L}\left[\frac{\Delta x}{2}f(u_1^n)+\sum_{j=2}^N\Delta x f(u_j^n) + \frac{\Delta x}{2}f(u_{N+1}^n)\right]\Bigg)\nonumber \\
\!&=&\! \sum_{i=2}^Nu_i^n\Delta x+\Delta t \Bigg(\frac{1}{\Delta x}\left[\frac{u_{N+1}^n-u_N^n}{\sqrt{1+\left[\frac{u_{N+1}^n-u_N^n}{\Delta x}\right]^2}}-\frac{u_2^n-u_1^n}{\sqrt{1+\left[\frac{u_2^n-u_1^n}{\Delta x}\right]^2}}\right]\nonumber \\
&+&\lambda\sum_{i=2}^Nf(u_i^n)\Delta x-\sum_{i=2}^N\Delta x\frac{\lambda}{L}\left[\frac{\Delta x}{2}f(u_1^n)+\sum_{j=2}^N\Delta x f(u_j^n) + \frac{\Delta x}{2}f(u_{N+1}^n)\right]\Bigg) \nonumber \\
\end{eqnarray}
Now we multiply both of the boundary terms in~(\ref{eq:leftboundary}) and~(\ref{eq:rightboundary}) by $\frac{\Delta x}{2}$ and add the resulting equations to~(\ref{eq:sum}) to give
\begin{eqnarray}
 \left[\frac{u_1^{n+1}}{2}+\sum_{j=2}^Nu_i^{n+1}+\frac{u_{N+1}^{n+1}}{2}\right]\Delta x =\left[\frac{u_1^n}{2}+\sum_{j=2}^Nu_i^n+\frac{u_{N+1}^n}{2}\right]\Delta x\nonumber 
\end{eqnarray}
and so mass is conserved at the discrete level as required.
\end{proof}
\subsection*{Numerical experiments}
We present the results of some numerical simulations for the non-local mass-conserving equation~(\ref{problem}) using the explicit mass-conserving numerical scheme described in Section~\ref{sect:numerical scheme}. For $0<M<\frac{1}{\sqrt{5}}$ fixed, we choose both sub- and supercritical lengths $L$ (see Proposition~\ref{prop:mc direction of pitchfork}). In the subcritical case (Experiment~\ref{exp:subcritical}) we use AUTO to plot the bifurcation diagram for monotone classical solutions to~(\ref{eq:masscon}) and find the value $\lambda_1(M,L)$ of Theorem~\ref{thm:nomoreclassical} for these values of $M$ and $L$. Then we solve~(\ref{problem}) using~(\ref{eq:interior}),~(\ref{eq:leftboundary}),~(\ref{eq:rightboundary}) for $\lambda<\lambda_1(M,L)$ and for $\lambda>\lambda_1(M,L)$ with monotone initial data satisfying the mass constraint and present the initial data with the final equilibrium state in each case. In the supercritical case (Experiment~\ref{exp:supercritical}), we use AUTO to plot the bifurcation diagram for monotone classical solutions to~(\ref{eq:masscon}) and also for non-monotone classical solutions to~(\ref{eq:masscon}) which have two inflection points in $(0,L)$. We find $\lambda_1(M,L)$ and $\lambda_2(M,L)$ the values of $\lambda$ such that for all $\lambda>\lambda_i(M,L)$, there are no classical solutions to~(\ref{eq:masscon}) with $i$ inflection points in $(0,L)$ for $i=1,2$ respectively. We then run experiments solving~(\ref{problem}) in the cases $\lambda>\lambda_1(M,L)$ and $\lambda>\lambda_2(M,L)$ using~(\ref{eq:interior}),~(\ref{eq:leftboundary}),~(\ref{eq:rightboundary}) for particular initial data $u_0(x)$. \\

\begin{experiment}\label{exp:subcritical}
\textup{We fix $M=0.2$ so that $L^*=2.1856$ from~(\ref{Lstar}) and so, to have a subcritical $L$, we can take $L=1.7$ in this case. We use AUTO to plot the bifurcation diagram for monotone classical solutions to~(\ref{eq:masscon}) for these values of $L$ and $M$ in Figure~\ref{fig:bifdiag}. We have a subcritical bifurcation from the trivial solution $u(x)=M$ when $\lambda=\frac{\pi^2}{1.7^2f'(0.2)}=3.8808$ and according to AUTO, the bifurcation diagram for monotone classical solutions to the stationary problem stops when $\lambda=\lambda_1(0.2,1.7)\simeq 4.3032$ as in Figure~\ref{fig:bifdiag}.
\begin{figure}[H]
\begin{center}
\scalebox{0.665}{\psfrag{L}{\large $L^*$}
\psfrag{0}{\large $0.$}
\psfrag{1}{\large $1.$}
\psfrag{2}{\large $2.$}
\psfrag{3}{\large $3.$}
\psfrag{4}{\large $4.$}
\psfrag{5}{\large $5.$}
\psfrag{6}{\large $6.$}
\psfrag{r}{\large $u(0)$}
\psfrag{l}{\large $\lambda$}
\psfrag{-1}{\large $-1.$}
\psfrag{M}{\large $M$}
\includegraphics{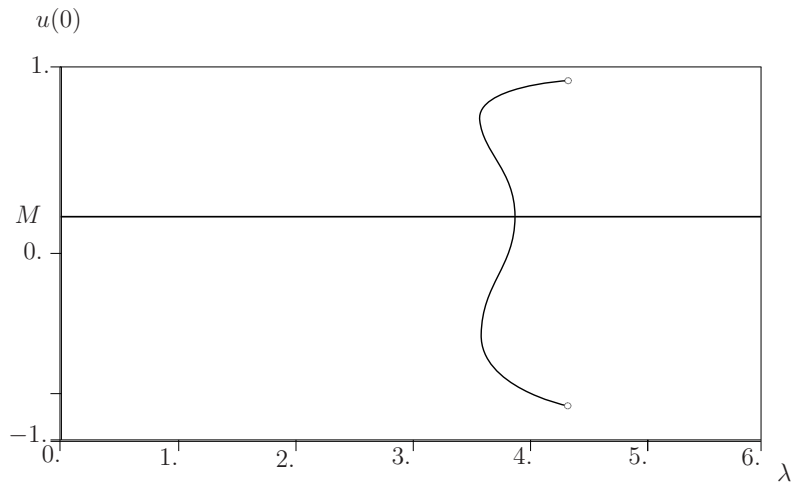}}
\end{center}
\caption{Bifurcation diagram for monotone classical solutions to~(\ref{eq:masscon}) with $M=0.2$, $L=1.7$ where $\lambda_1(M,L)\simeq4.3032$.} 
\label{fig:bifdiag}
\end{figure}
 We take the following initial data
\begin{eqnarray}\label{eq:mcinitialdata}
 u_0(x)=0.3-0.5\tanh\left(1000\left[\frac{x}{L}-0.4\right]\right),
\end{eqnarray}
which satisfies the mass constraint, i.e.
\begin{eqnarray}
  \frac{1}{L}\int_0^Lu_0(x)\,dx=M=0.2.\nonumber
\end{eqnarray}
Hence we present the equilibrium solutions to the time-dependent problem~(\ref{problem}), ~(\ref{eq:mcinitialdata}) in Figure~\ref{fig:M=0.2,L=1.7} for $\lambda=4<\lambda_1(M,L)$ (left) and $\lambda=5>\lambda_1(M,L)$ (right) obtained using the scheme in~(\ref{eq:interior}),~(\ref{eq:leftboundary}),~(\ref{eq:rightboundary}) with $N=500$.
\begin{figure}[H]
\begin{center}
\scalebox{0.38}{\includegraphics{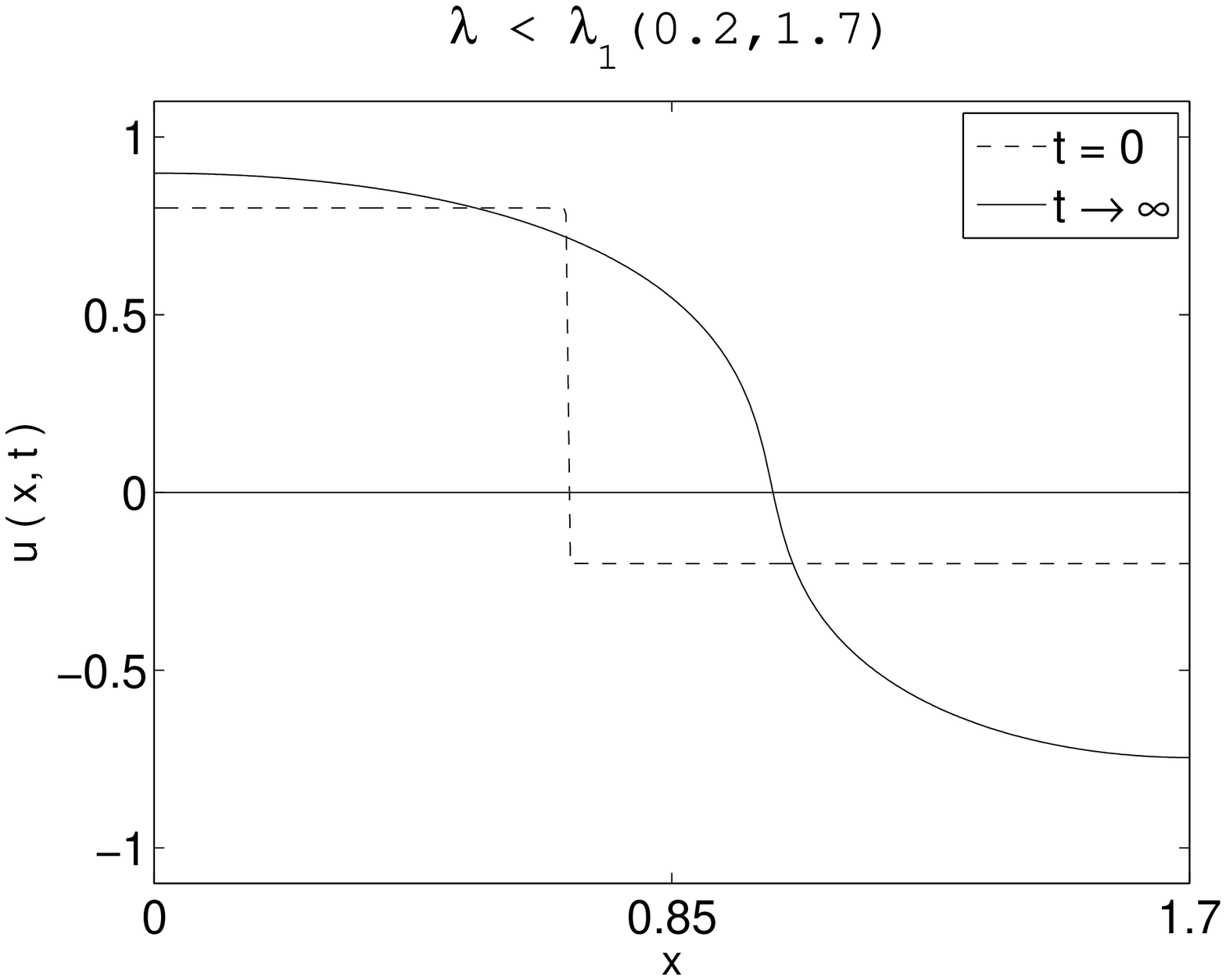}\includegraphics{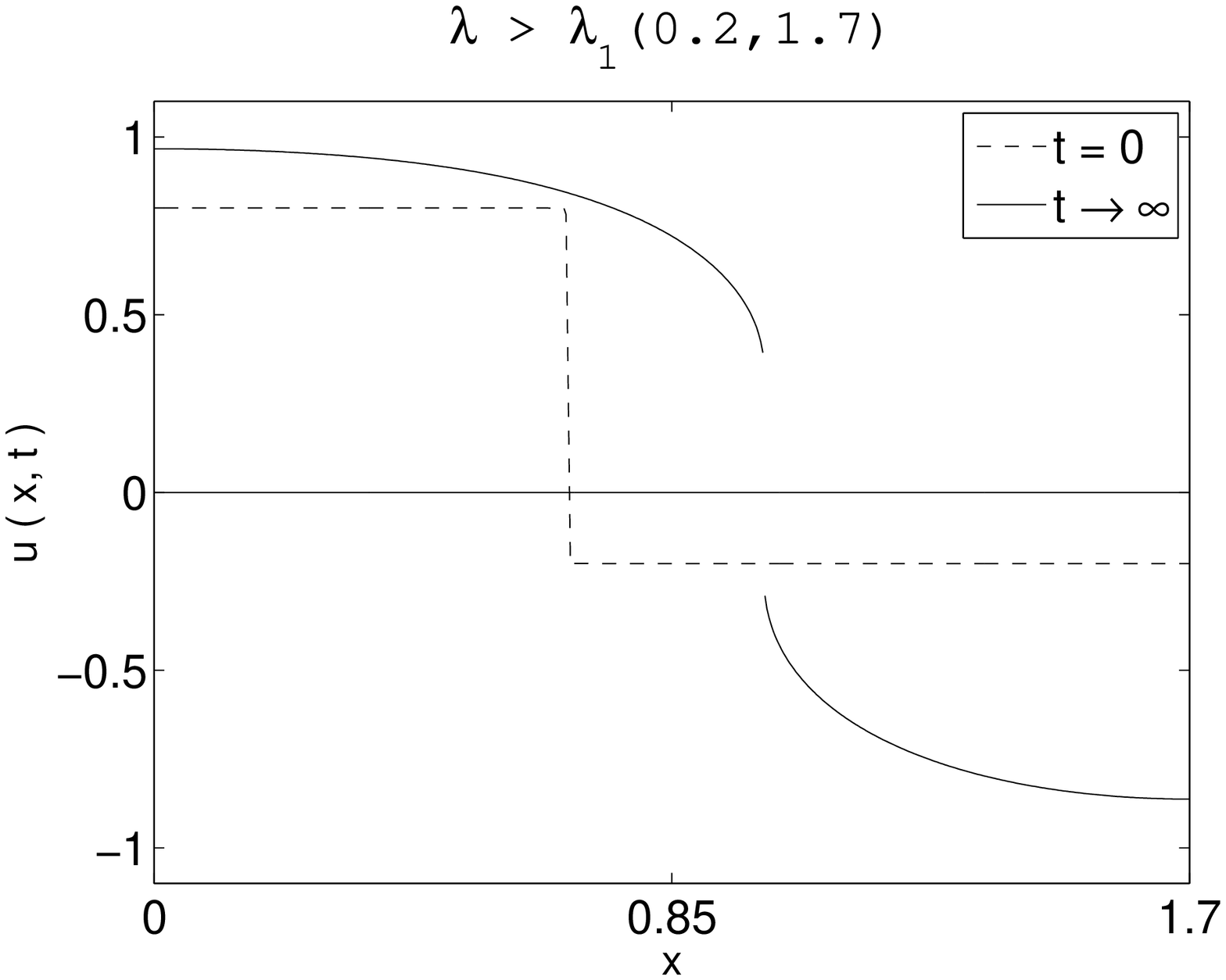}}
\end{center}
\caption{Initial data and equilibrium solutions to~(\ref{problem}),~(\ref{eq:mcinitialdata}) for $\lambda<\lambda_1(0.2,1.7)$ (left) and $\lambda>\lambda_1(0.2,1.7)$ (right).}
\label{fig:M=0.2,L=1.7}
\end{figure}}
\end{experiment}

\begin{experiment}\label{exp:supercritical}
\textup{Suppose we now consider the case $M=0.2$ with $L=2.5$. We have a supercritical bifurcation from the trivial solution $u(x)=M$ when $ \lambda=\frac{\pi^2}{2.5^2f'(0.2)}=1.7945$ and according to AUTO, the bifurcation diagram for monotone classical solutions to the stationary problem stops when $\lambda=\lambda_1(0.2,2.5)\simeq 4.0433$. There is also a subcritical bifurcation from the trivial solution when $ \lambda=\frac{4\pi^2}{2.5^2f'(0.2)}=7.1779$ and a curve of non-monotone classical solutions to~(\ref{eq:masscon}) which stops when $\lambda=\lambda_2(0.2,2.5)\simeq 4.9872$ just after it has reached a saddle-node at $\lambda=\lambda_{sn}\simeq4.9714$ as in Figure~\ref{fig:bifdiag2}.
\begin{figure}[H]
\begin{center}
\scalebox{0.69}{\psfrag{L}{\large $L^*$}
\psfrag{0}{\large $0.$}
\psfrag{1}{\large $1.$}
\psfrag{2}{\large $2.$}
\psfrag{4}{\large $4.$}
\psfrag{6}{\large $6.$}
\psfrag{8}{\large $8.$}
\psfrag{10}{\large $10.$}
\psfrag{r}{\large $u(0)$}
\psfrag{l}{\large $\lambda$}
\psfrag{-1}{\large $-1.$}
\psfrag{M}{\large $M$}
\includegraphics{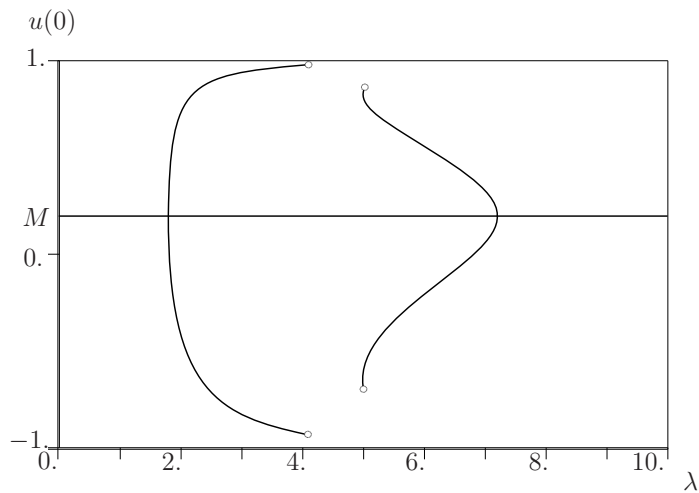}}
\end{center}
\caption{Bifurcation diagram for classical solutions to~(\ref{eq:masscon}) with at most two inflection points in $(0,L)$ where $M=0.2$, $L=2.5$, $\lambda_1(M,L)\simeq4.0433$, $\lambda_2(M,L)\simeq 4.9872$ and there is a saddle-node at $\lambda=\lambda_{sn}\simeq4.9714$.} 
\label{fig:bifdiag2}
\end{figure}}
\end{experiment}

 Suppose we solve~(\ref{problem}) for $L=2.5$, $M=0.2$ and fixed $\lambda>\lambda_1(M,L)$ with initial data given by
\begin{eqnarray}\label{interfaceic}
 u_0(x)=\beta +0.5\tanh\left(1000\left(\frac{x}{L}-\gamma\right)\right),
\end{eqnarray}
and we vary the position $x_0=\gamma L$ of the interface in~(\ref{interfaceic}) but ensure that 
\begin{eqnarray}\label{eq:massconstraint}
 \frac{1}{L}\int_0^Lu_0(x)=M,
\end{eqnarray}
still holds by changing $\beta$ accordingly. The results of taking $L=2.5$, $M=0.2$ with $\lambda=8>\lambda_1(0.2,2.5)\simeq 4.0433$ and solving~(\ref{problem}) for various values of $\gamma$ (and $\beta$) are plotted in Figure~\ref{fig:overlay} where one sees that modifying the initial data while ensuring that~(\ref{eq:massconstraint}) holds has an effect on the equilibrium state to which the solution converges as $t\rightarrow\infty$ which is not the case for $\lambda<\lambda_1(M,L)$. This suggests that the discontinuous equilibria for~(\ref{eq:massconrose}) existing for $\lambda>\lambda_1(M,L)$ are \textbf{normally} stable in the sense of \cite{PSZ}.
\begin{figure}[H]
\begin{center}
\scalebox{0.35}{\includegraphics{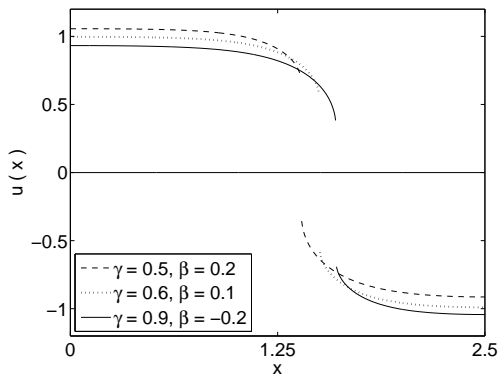}}
\end{center}
\caption{Equilibrium solutions to~(\ref{problem}),~(\ref{interfaceic}) with $(\gamma,\beta)=(0.5,0.2),\,\,(0.6,0.1),\,\,(0.9,-0.2)$  solved using~(\ref{eq:interior}),~(\ref{eq:leftboundary}),~(\ref{eq:rightboundary}) with $N=500$.}
\label{fig:overlay}
\end{figure}
 Finally we take $\lambda>\lambda_2(0.2,2.5)$ with the following non-monotone initial data we define piecewise as follows
 \begin{equation}\label{eq:nonmonotone}
   u_0(x) = \left\{ 
\begin{array}{ll}
0.32-0.6\tanh\left(1000\left[\frac{x}{L}-0.2\right]\right) \hspace{0.3cm} 0&\le x <\frac{L}{2}\\
0.32+0.6\tanh\left(1000\left[\frac{x}{L}-0.8\right]\right) \hspace{0.3cm} \frac{L}{2}&\le x <L
\end{array} \right.,
\end{equation}
which satisfies the mass constraint~(\ref{eq:massconstraint}). In Figure~\ref{fig:nonmonotone} we show this initial data and the final equilibrium state to which the solution converges as $t\rightarrow\infty$ for $\lambda=8>\lambda_2(0.2,2.5)$. 
\begin{figure}[H]
\begin{center}
\scalebox{0.35}{\includegraphics{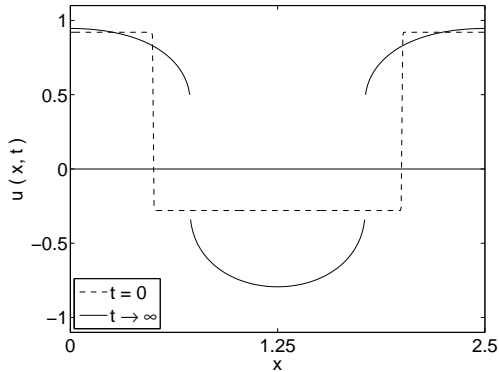}}
\end{center}
\caption{Non-monotone non-classical equilibrium solution to~(\ref{problem}),~(\ref{eq:nonmonotone}) for $\lambda>\lambda_2(0.2,2.5)\simeq 4.9872$ solved using~(\ref{eq:interior}),~(\ref{eq:leftboundary}),~(\ref{eq:rightboundary}) with $N=500$.} 
\label{fig:nonmonotone}
\end{figure}

\section{Conclusions}

We have introduced a non-local mass-conserving quasilinear equation~(\ref{eq:massconrose}) that can be viewed as a one-dimensional quasilinear version of the Rubinstein-Sternberg equation from \cite{RS} and we have considered its associated stationary problem. We have shown in Proposition~\ref{prop:mc direction of pitchfork} that unlike the situation for the bifurcation diagrams of stationary solutions to the one-dimensional Cahn-Hilliard equation studied in \cite{EFG}, the bifurcation behaviour for classical stationary solutions to~(\ref{eq:massconrose}) as $\lambda$ is varied depends on the length $L$ of the space interval $\Omega$ as well as on the average mass $M$ of a solution. We have also proved (Theorem~\ref{thm:nomoreclassical}) that for each $n\in\mathbb{N}$ there is a value of $\lambda=\lambda_n(M,L)$ such that for any $\lambda>\lambda_n(M,L)$ there cannot exist classical stationary solutions to~(\ref{eq:massconrose}) with $n$ points of inflection in $(0,L)$. We have presented some numerical results using AUTO which illustrate Proposition~\ref{prop:mc direction of pitchfork} and Theorem~\ref{thm:nomoreclassical} and we derived a mass-conserving numerical scheme for~(\ref{eq:massconrose}). There is numerical evidence (see Figure~\ref{fig:overlay}) to suggest that for $\lambda>\lambda_1(M,L)$, there exist a continuum of \textbf{normally} stable (in the sense of \cite{PSZ}) non-classical monotone stationary solutions to~(\ref{eq:massconrose}) with a discontinuity at some $x_0\in (0,L)$. We have also presented numerical evidence which suggests that there are also {\em stable} non-monotone non-classical stationary solutions to~(\ref{eq:massconrose}) for $\lambda$ large enough (see Figure~\ref{fig:nonmonotone}).\\

An understanding of exactly what happens in the ``metastable'' regime $\left(\frac{1}{\sqrt{3}}<|M|<1\right)$ is required.  We were not able to perform a two parameter continuation in $\lambda$ and in $M$ of the saddle-nodes arising in the $\frac{1}{\sqrt{5}}<|M|<\frac{1}{\sqrt{3}}$ parameter regime since for $M$ large enough, the classical solutions to~(\ref{eq:masscon}) stop existing in the bifurcation diagrams before the saddle-nodes are reached. One can show however through an analysis of the phase plane associated with the ancillary problem for~(\ref{eq:masscon}) that for $|M|\ge1$, there are no non-trivial solutions to~(\ref{eq:masscon}).

We also note that no existence or stabilisation theory for (\ref{eq:massconrose}) or (\ref{eq:CHRose}) is available. 

We hope that the class of models introduced in this paper will be of interest to material scientists and that its applicability to real materials will be tested as has been done for more classical models in, for example, \cite{HS}.

\section*{Appendix}

In this appendix, we present the details of the Liapunov-Schmidt Reduction we discussed in Section~\ref{sect:statprob}. For the operator $G:D(G)\rightarrow H$ given in~(\ref{eq:G}), let $ S=(dG)_{0,\lambda_k,M}:D(G)\rightarrow H$, with kernel $\mathcal{K}$ and range $\mathcal{R}$ given by 
\begin{equation}\label{eq:range(S)}
\mathcal{K}=\left\{\textup{span}\left[\cos\left(\frac{k\pi
x}{L}\right)\right]\right\}
\hbox{ and } \mathcal{R}=\left\{w\in H:\int_0^Lw(x)\cos\left(\frac{k\pi x}{L}\right)\,dx=0\right\},
\end{equation}
respectively, and let $E:H\rightarrow\mathcal{R}$ denote the projection of $H$ onto $\mathcal{R}$. The linearisation operator $S$ for equation~(\ref{eq:masscon}) at a bifurcation point $(\lambda_k,0)$ is a self-adjoint elliptic operator and therefore Fredholm of index zero (see \cite[Appendix 4]{GS}) and so, following the steps in \cite[p.29]{GS}, the spaces $D(G)$ and $H$ are decomposed as 
\[
 D(G)=\mathcal{K}\oplus\mathcal{K}^\perp,\hspace{0.5in}H=\mathcal{K}\oplus\mathcal{K}^\perp,\nonumber
\]
since $\mathcal{K}=\mathcal{R}^\perp$ and $\mathcal{K}^\perp=\mathcal{R}$. The coordinates chosen in the Liapunov-Schmidt reduction are then \begin{eqnarray}
 v_k^*=2\cos\left(\frac{k\pi x}{L}\right),\hspace{0.1in} v_k=\cos\left(\frac{k\pi x}{L}\right).\nonumber
\end{eqnarray}

From \cite[Chapter 1]{GS}, the derivatives of the reduced function $h(\lambda,y)$ evaluated at $\lambda=\lambda_k$, $y=0$ are given by 
\begin{eqnarray}\label{eq:derivatives of h}
 h_y\!&=&\!\left\langle v_k^*,dG(v_k)\right\rangle=0\nonumber \\ 
 h_{yy}\!&=&\!\left\langle v_k^*,d^2G(v_k,v_k)\right\rangle\nonumber \\
 h_{yyy}\!&=&\!\left\langle v_k^*,d^3G(v_k,v_k,v_k)-3d^2G(v_k,S^{-1}E[d^2G(v_k,v_k)])\right\rangle \\
 h_{\lambda}\!&=&\!\left\langle v_k^*,G_{\lambda}\right\rangle\nonumber \\
 h_{\lambda y}\!&=&\!\left\langle v_k^*,dG_{\lambda}(v_k)-d^2G(v_k,S^{-1}EG_{\lambda})\right\rangle, \nonumber 
\end{eqnarray}
where $\langle\cdot,\cdot\rangle$ denotes the $L^2$-inner product on $[0,L]$. \\

We now proceed to calculate the derivatives of the reduced function $h(\lambda,y)$ in order to determine locally the direction of the pitchfork bifurcations from the trivial solution $u(x)=M$ of~(\ref{eq:masscon}) for a given $L$ and a given $M$. This is complicated by the fact that the operator $G$ in~(\ref{eq:G}) is not odd and so terms involving $S^{-1}$ in~(\ref{eq:derivatives of h}) will not necessarily vanish. In order to invert $S$ we will need to solve an ordinary differential equation; see~(\ref{eq:ODE}). \\

We have
\begin{eqnarray}
 (d^2G)_{0,\lambda_k,M}(w_1,w_2)\!&=&\!\frac{\partial^2}{\partial t_1\partial t_2}G(0+t_1w_1+t_2w_2,\lambda_k,M)\nonumber \\
 \!&=&\!\lambda_k f''(M)w_1w_2-\frac{\lambda_k}{L}\int_0^Lf''(M)w_1(x)w_2(x)\,dx,\nonumber
\end{eqnarray}
and so
\begin{eqnarray}
  (d^2G)_{0,\lambda_k,M}(v_k,v_k)\!&=&\!\lambda_k f''(M)\cos^2\left(\frac{k\pi x}{L}\right)-\frac{\lambda_k}{L}\int_0^Lf''(M)\cos^2\left(\frac{k\pi x}{L}\right)\,dx\nonumber \\
  \!&=&\!\lambda_k f''(M)\cos^2\left(\frac{k\pi x}{L}\right)-\lambda_k\frac{f''(M)}{2}.\nonumber
\end{eqnarray}
Hence by~(\ref{eq:derivatives of h})
\begin{eqnarray}
 h_{yy}\!&=&\!\left\langle v_k^*,d^2G(v_k,v_k)\right\rangle\nonumber \\
 \!&=&\!\int_0^L2\cos\left(\frac{k\pi x}{L}\right)\left[\lambda_kf''(M)\cos^2\left(\frac{k\pi x}{L}\right)-\lambda_k\frac{f''(M)}{2}\right]\,d x\nonumber\\
 \!&=&\!0.\nonumber
\end{eqnarray}
Set
\begin{eqnarray} 
(d^2G)_{0,\lambda_k,M}(v_k,v_k)=\lambda_kf''(M)\cos^2\left(\frac{k\pi x}{L}\right)-\lambda_k\frac{f''(M)}{2}=p(x).\nonumber
\end{eqnarray}
so that 
\begin{eqnarray}
 p'(0)=p'(L)=0,\nonumber
\end{eqnarray}
and
\begin{eqnarray}
 \frac{1}{L}\int_0^Lp(x)\,dx\!&=&\!0,\nonumber 
\end{eqnarray}
therefore $(d^2G)_{0,\lambda_k,M}(v_k,v_k)\in H$. \\

However, 
\begin{eqnarray}
 \int_0^L(d^2G)_{0,\lambda_k,M}\hspace{-0.35cm}& &\hspace{-0.35cm}(v_k,v_k)\cos\left(\frac{k\pi x}{L}\right)\,dx\nonumber \\
 \!&=&\!\int_0^L\left[\lambda_kf''(M)\cos^3\left(\frac{k\pi x}{L}\right)-\lambda_k\frac{f''(M)}{2}\cos\left(\frac{k\pi x}{L}\right)\right]\,dx\nonumber \\
\!&=&\!0,\nonumber
\end{eqnarray}
so that $(d^2G)_{0,\lambda_k,M}(v_k,v_k)\in \mathcal{R}$ by~(\ref{eq:range(S)}). Therefore $(d^2G)_{0,\lambda_k,M}(v_k,v_k)\in H$ trivially decomposes as
\begin{eqnarray}
 (d^2G)_{0,\lambda_k,M}(v_k,v_k)=(d^2G)_{0,\lambda_k,M}(v_k,v_k)+0,\nonumber
\end{eqnarray}
where $(d^2G)_{0,\lambda_k,M}(v_k,v_k)\in \mathcal{R}$ and of course $0\in \mathcal{R}^\perp$. Thus, since $E:H\rightarrow \mathcal{R}$ is the projection of $H$ onto the range of $S$, we have
\begin{eqnarray}
E[(d^2G)_{0,\lambda_k,M}(v_k,v_k)]=(d^2G)_{0,\lambda_k,M}(v_k,v_k),\nonumber
\end{eqnarray}
and we consider
\begin{eqnarray}
 S^{-1}E[(d^2G)_{0,\lambda_k,M}(v_k,v_k)]\!&=&\!S^{-1}(d^2G)_{0,\lambda_k,M}(v_k,v_k)=l(x)\nonumber \\
 \Rightarrow(d^2G)_{0,\lambda_k,M}(v_k,v_k)\!&=&\!Sl(x).\nonumber
\end{eqnarray}
Thus the second order ordinary differential equation that we need to solve for $l(x)$ in order to obtain a formula for $S^{-1}E[(d^2G)_{0,\lambda_k,M}(v_k,v_k)]$ is
\begin{eqnarray}\label{eq:ODE}
 l''(x)+\lambda_kf'(M)l(x)=\lambda_kf''(M)\cos^2\left(\frac{k\pi x}{L}\right)-\lambda_k\frac{f''(M)}{2},
\end{eqnarray}
which has solution 
\begin{eqnarray}
 l(x)=\cos\left(\frac{k\pi x}{L}\right)-\frac{1}{6}\frac{f''(M)}{f'(M)}\cos\left(\frac{2k\pi x}{L}\right),\nonumber
\end{eqnarray}
so that
\begin{eqnarray}
S^{-1}E[(d^2G)_{0,\lambda_k,M}(v_k,v_k)]=\cos\left(\frac{k\pi x}{L}\right)-\frac{1}{6}\frac{f''(M)}{f'(M)}\cos\left(\frac{2k\pi x}{L}\right).\nonumber
\end{eqnarray}
Hence we compute
\begin{eqnarray}
 d^2G(v_k,S\hspace{-0.35cm}& &\hspace{-0.35cm}^{-1}E[d^2G(v_k,v_k)])\nonumber \\
 \!&=&\!d^2G\left(\cos\left(\frac{k\pi x}{L}\right),\cos\left(\frac{k\pi x}{L}\right)-\frac{1}{6}\frac{f''(M)}{f'(M)}\cos\left(\frac{2k\pi x}{L}\right)\right)\nonumber \\
 \!&=&\!\lambda_kf''(M)\left[\cos^2\left(\frac{k\pi x}{L}\right)-\frac{1}{6}\frac{f''(M)}{f'(M)}\cos\left(\frac{2k\pi x}{L}\right)\cos\left(\frac{k\pi x}{L}\right)\right] \nonumber \\
 \hspace{-0.35cm}& &\hspace{-0.35cm}-\lambda_k\frac{1}{L}\int_0^Lf''(M)\left[\cos^2\left(\frac{k\pi x}{L}\right)-\frac{1}{6}\frac{f''(M)}{f'(M)}\cos\left(\frac{2k\pi x}{L}\right)\cos\left(\frac{k\pi x}{L}\right)\right]\,dx\nonumber \\
 \!&=&\!\lambda_kf''(M)\left[\cos^2\left(\frac{k\pi x}{L}\right)-\frac{1}{6}\frac{f''(M)}{f'(M)}\cos\left(\frac{2k\pi x}{L}\right)\cos\left(\frac{k\pi x}{L}\right)\right]-\lambda_k\frac{f''(M)}{2}, \nonumber
\end{eqnarray}
and so we have
\begin{eqnarray}\label{eq:1stpart}
\langle v_k^*,3d^2G(v_k,S\hspace{-0.35cm}& &\hspace{-0.35cm}^{-1}E[d^2G(v_k,v_k)])\rangle\nonumber \\
\!&=&\!\int_0^L6\lambda_kf''(M)\left[\cos^3\left(\frac{k\pi x}{L}\right)-\frac{1}{6}\frac{f''(M)}{f'(M)}\cos\left(\frac{2k\pi x}{L}\right)\cos^2\left(\frac{k\pi x}{L}\right)\right]\,dx\nonumber \\
\hspace{-0.35cm}& &\hspace{-0.35cm}-\int_0^L3\lambda_kf''(M)\cos\left(\frac{k\pi x}{L}\right)\,dx\nonumber \\
\!&=&\!-\frac{\lambda_k L}{4}\frac{[f''(M)]^2}{f'(M)}=-\frac{k^2\pi^2}{4L}\frac{[f''(M)]^2}{[f'(M)]^2}.
\end{eqnarray}
In addition to this,
\begin{eqnarray}
 (d^3G)_{0,\lambda_k,M}(w_1,w_2,w_3)\!&=&\!\frac{\partial^3}{\partial t_1\partial t_2\partial t_3}G(0+t_1w_1+t_2w_2+t_3w_3,\lambda_k,M)|_{t_1=t_2=t_3=0}\nonumber \\
 \!&=&\!-3[w_3''w_1'w_2'+w_2''w_1'w_3'+w_1''w_2'w_3']+\lambda_kf'''(M)w_1w_2w_3\nonumber \\
\hspace{-0.35cm}& &\hspace{-0.35cm}-\lambda_kf'''(M)\frac{1}{L}\int_0^Lw_1(x)w_2(x)w_3(x)\,dx,\nonumber
\end{eqnarray}
so that
\begin{eqnarray}
 (d^3G)_{0,\lambda_k,M}(v_k,v_k,v_k)\hspace{-0.15cm}&-&\hspace{-0.15cm}9[v_k''(v_k')^2]+\lambda_kf'''(M)v_k^3-\lambda_kf'''(M)\frac{1}{L}\int_0^Lv_k^3(x)\,dx\nonumber \\
\!&=&\!\frac{9k^4\pi^4}{L^4}\cos\left(\frac{k\pi x}{L}\right)\sin^2\left(\frac{k\pi x}{L}\right)+\lambda_kf'''(M)\cos^3\left(\frac{k\pi x}{L}\right),\nonumber  
\end{eqnarray}
and
\begin{eqnarray}\label{eq:2ndpart}
 \left\langle v_k^*,d^3G(v_k,v_k,v_k)\right\rangle \!&=&\!\int_0^L\left[\frac{18 k^4\pi^4}{L^4}\cos^2\left(\frac{k\pi x}{L}\right)\sin^2\left(\frac{k\pi x}{L}\right)+2\lambda_kf'''(M)\cos^4\left(\frac{\pi x}{L}\right)\right]\,dx\nonumber \\
 \!&=&\!\frac{3k^2\pi^2}{4L^3}\left(3k^2\pi^2+L^2\frac{f'''(M)}{f'(M)}\right).
\end{eqnarray}
Therefore from~(\ref{eq:derivatives of h}),~(\ref{eq:1stpart}) and~(\ref{eq:2ndpart}) we obtain
\begin{eqnarray}\label{eq:hyyy}
 h_{yyy}\!&=&\!\left\langle v_k^*,d^3G(v_k,v_k,v_k)-3d^2G(v_k,S^{-1}E[d^2G(v_k,v_k)])\right\rangle\nonumber \\
 \!&=&\!\frac{3k^2\pi^2}{4L^3[f'(M)]^2}\left(3k^2\pi^2[f'(M)]^2+L^2f'''(M)f'(M)+\frac{L^2}{3}[f''(M)]^2\right).
\end{eqnarray}
Also, 
\begin{eqnarray}
 G_\lambda(v,\lambda,M)=f(v+M)-\frac{1}{L}\int_0^Lf(v(x)+M)\,dx,\nonumber
\end{eqnarray}
so that  $G_\lambda(0,\lambda_k,M)=0$ which implies that
\begin{eqnarray}\label{eq:2ndpart0}
 (d^2G)_{0,\lambda_k,M}(v_k,S^{-1}E[G_\lambda(0,\lambda_k,M)])=0,
\end{eqnarray}
while
\begin{eqnarray}\label{eq:1stpartn0}
 (dG_\lambda)_{0,\lambda_k,M}\cdot w\!&=&\!f'(M)w-\frac{1}{L}\int_0^Lf'(M)w(x)\,dx\nonumber \\
\!&=&\!f'(M)w,
\end{eqnarray}
for any $w\in D(G)$. Therefore, from~(\ref{eq:derivatives of h}),~(\ref{eq:2ndpart0}) and~(\ref{eq:1stpartn0}) we have
\begin{eqnarray}\label{eq:hlambday}
  h_{\lambda y}\!&=&\!\left\langle v_k^*,dG_{\lambda}(v_k)-d^2G(v_k,S^{-1}EG_\lambda)\right\rangle \nonumber \\
  \!&=&\!\int_0^L2\cos\left(\frac{k\pi x}{L}\right)f'(M)\cos\left(\frac{k\pi x}{L}\right)\,dx\nonumber \\
  \!&=&\!Lf'(M).
\end{eqnarray}

\end{document}